\documentclass[reqno]{amsart}
\usepackage{amsmath, color}
\usepackage{amssymb}
\usepackage{amsfonts}
\usepackage{graphicx}
\vfuzz2pt 
\hfuzz2pt 
\setlength{\oddsidemargin}{0.96cm}
\setlength{\topmargin}{1.46cm}
\setlength{\textwidth}{14cm}
\setlength{\textheight}{21.6cm}
\setlength{\headheight}{0cm}
\newtheorem{thm}{Theorem}[section]

\newtheorem{lem}{Lemma}[section]

\newtheorem{prop}{Proposition}[section]
\newtheorem{definition}{Definition}[section]
\newtheorem{remark} {Remark}[section]
\numberwithin{equation}{section}

\newcommand{\ba}{\begin{aligned}}
\newcommand{\ea}{\end{aligned}}
\newcommand{\be}{\begin{equation}}
\newcommand{\ee}{\end{equation}}
\newcommand{\bnn}{\begin{eqnarray*}}
\newcommand{\enn}{\end{eqnarray*}}
\newcommand{\n}{\rho}
\newcommand{\ep}{\epsilon}

\newcommand{\g}{\gamma}
\newcommand{\p}{\partial}

\renewcommand{\div}{{\rm div}}
\newcommand{\na}{\nabla}
\newcommand{\la}{\label}

\def\u{{\bf u}}
\def\norm[#1]#2{\|#2\|_{#1}}
\def\m{{\bf m}}
\def\o{\Omega}
\def\de{\delta}

\def\th{\theta}

\def\r{\mathbb{R}^{n}}
\def\u{{\bf u}}
\def\lap{\triangle}

\begin{document}

\title[Long-time behavior     for NS equations]{Long-time behavior of weak solutions    for compressible  Navier-Stokes equations with degenerate viscosity}%
\author[Z. Liang]{Zhilei Liang}
\address{School of   Mathematics, Southwestern  University of Finance and Economics, Chengdu  611130,  China}
\email{zhilei0592@gmail.com}
\subjclass[2010]{35Q30,76N10.}
\keywords{Compressible Navier-Stokes;   Degenerate viscosity; Weak solutions; Vanishing vacuum; Asymptotic behavior.}%
\date{}
\begin{abstract}   The long-time regularity and   asymptotic of  weak solutions are studied for    compressible Navier-Stokes equations with degenerate  viscosity in a bounded periodic domain in   two and three  dimensions.   It is shown  that the density keeps strictly positive from below and above  after  a  finite  period of time.  Moreover, higher velocity regularity    is obtained via a parabolic type  iteration technique. Since then  the weak solution  conserves its energy equality,  and    decays  exponentially  to the   equilibrium in  $L^{2}$-norm  as time goes to infinity. In addition, assume that  the initial momentum is zero, the  exponential   decay  rate of the derivatives is derived,   and  the weak solution  becomes a strong one in two dimensional space.
\end{abstract}
\maketitle
\section{Introduction}

The  time-evolutionary Navier-Stokes equations simulate the    motion of un-stationary   compressible
fluids. It is an  important mathematical model in continuous medium mechanics theory and has a wide applications   in many fields, such as
astrophysics, engineering, and so on.  In this paper we focus on the     isentropic compressible Navier-Stokes equations (cf. \cite{lau,p2})
\begin{equation}\label{n1}
\left\{\ba
& \p_{t}\n+{\rm div}(\n \u)=0,\\
& \p_{t}(\n \u) +{\rm div}(\n \u \otimes \u)+\na \n^{\g}=\div(\n \na\u).\ea \right.
\end{equation}
Here, variables $t>0,\,x\in\o  \subseteq \r,\, (n=2,3), $   the   unknown functions $\n(x,t)$ and $\u(x,t)$ are the density and the velocity field  respectively;    the adiabatic exponent $\g$ is assumed to satisfy $\g>1.$

In this paper we are limited to 
the bounded domain with periodic boundaries, i.e.,  $\o=\mathbb{T}^{n}$.  For completeness, we    supplement equations \eqref{n1} with  the initial functions
\be\la{n1a}\n(x,0)=\n_{0}\ge 0,\quad \n \u (x,0)=\m_{0},\quad x\in\o.\ee

\bigskip

There are many literatures   concerning  the  existence, regularity, and long-time behavior of    solutions for compressible Navier-Stokes equations.  We begin with the constant viscous coefficient case and   collect  some  existence and regularity  results  of weak solutions, without   completeness.  The progress of one-dimensional problem is satisfactory, see, e.g., \cite{anton,hoff,jiang1,jiang2,liangli}. For high dimensions with positive density, we refer to the papers by Hoff \cite{hoff1} and by Serre \cite{serr}. However, it  is a very challenging
problem in mathematics when initial vacuum appears,    Lions \cite{p2} first   proved the global existence of weak solutions when the  adiabatic exponent  $\g\ge\frac{9}{5}$.    Feireisl-Novotn\'{y}-Petzeltov\'{a} \cite{fei} improved  Lions'  result  to a wider range of $\g>\frac{3}{2}$.  Recently, Plotnikov-Weigant \cite{pw} discussed the global existence of weak solutions in   case of $\g=\frac{3}{2}$.  Jiang-Zhang \cite{jiangzhang} obtained  the existence of weak solutions  for  all  $\g>1$ if   some   symmetry assumptions are made. As far as the   regularity of   solutions is concerned,   Desjardins \cite{des} proved that if  $\g>3$,   the weak solutions satisfy, for some $T>0,$
\bnn\ba &\n\in L^{\infty}\left(0,T;L^{\infty}(\o)\right),\quad \na \u\in L^{\infty}\left(0,T;L^{2}(\o)\right),\\
& \sqrt{\n}\u_{t},\,\na\times (\na\times \u),\,\na(\div \u-P)\in L^{\infty}\left(0,T;L^{2}(\o)\right).\ea\enn
  Choe-Jin \cite{kim} discussed similar   regularity of weak solutions  in case of zero   bulk viscosity.

Physically,  the dynamics of the viscous fluids near vacuum are better modeled by the
  Navier-Stokes equations with density-dependent viscosities. It can also be understand mathematically in the
derivation of the compressible Navier-Stokes equations from the Boltzmann equation
by the Chapman-Enskog expansions, where  the viscosity depends on the temperature and thus the  density  for isentropic flows.  Equations \eqref{n1}   corresponds to the shallow water
model in the case  $\g=2$ in dimension two,  where $\n$ stands for the height of the water.  Such model describes  the horizontal structure of the fluids, and appear   often in geophysical flows (cf.\cite{g,gp,p2}).  We also regard  \eqref{n1}  as  a special case of the   equations
   \begin{equation}\label{1.3}
\left\{\ba
& \p_{t}\n+{\rm div}(\n \u)=0,\\
& \p_{t}(\n \u) +{\rm div}(\n \u \otimes \u)+\na \n^{\g}=\div \mathbb{S}, \ea \right.
\end{equation} where the stress tensor
  \bnn \mathbb{S} =h(\n)\na \u +g(\n) \div \u \mathbb{I} \quad {\rm or}\quad \mathbb{S}=h(\n)\mathbb{D} \u+ g(\n)\div \u \mathbb{I},\enn
$\mathbb{I}$ is the identical matrix in $\r$.

For smooth solutions to equations \eqref{1.3} on condition that  $ g(\n)=\n h^{'}(\n)-h(\n)$,   Bresch-Desjardins \cite{bres,bres1}  first  developed
a new entropy estimate
\be\la{1.4} \ba &\frac{d}{dt}\int\left(\frac{1}{2}\n|\u+\na \varphi(\n)|^{2}+\frac{1}{\g-1}\n^{\g}\right)\\
&+\int\left(\na \varphi(\n)\na\n^{\g}+h(\n)|\na\u-(\na\u)^{tr}|^{2}\right)=0,\quad \n\varphi'=h'.\ea\ee
Li-Li-Xin \cite{llx} studied the one-dimensional problem, they established   the global  entropy weak solution to  \eqref{1.3} and discussed the long-time dynamics:  vanishing  vacuum   states  and
blow-up phenomena. Stra\u{s}kraba-Zlotnik \cite{sz} studied the global regularity of weak solution and derived exponential  decay rate estimates.   We also refer   to  the papers \cite{has,jiang3,llx,kv,yang1,yang2,yang3} for    related   results in dimension one,  and the  papers   \cite{guo,jiu} for  high-dimensions  with     symmetric assumptions.
For general high dimensions $n=2,3,$ Mellet-Vasseur \cite{mv} provided a compactness framework so that  the   weak solutions can be established  from a sequence of  smooth   approximate  solutions.   On the basis of
 entropy estimate \eqref{1.4} due to Bresch-Desjardins,  as well as Mellet-Vasseur type estimates in \cite{mv}, the global existence of weak solutions to the problem \eqref{n1}-\eqref{n1a} are derived by  Li-Xin  \cite{lx} and  Vasseur-Yu \cite{yu} from different approach.

By weak solutions, we mean
\begin{definition} \la{defi} Function $(\n,\u)$ is called a weak solution  to  the problem \eqref{n1}-\eqref{n1a}  if for any fixed $T>0$
\be\la{1.5}\begin{cases}&0\le \n\in L^{\infty}\left(0,T; L^{1}(\o)\cap L^{\g}(\o)\right),\\
& \na  \sqrt{\n} \in L^{\infty}\left(0,T; L^{2}(\o)\right),
\quad
 \na \n^{\frac{\g}{2}} \in L^{2}\left(0,T; L^{2}(\o)\right)\\
 & \sqrt{\n}\u\in L^{\infty}\left(0,T; L^{2}(\o)\right),\quad \sqrt{\n} \na\u\in L^{2}\left(0,T; W^{-1,1}(\o)\right) .
\end{cases} \ee
Moreover,

 (i).  the problem \eqref{n1}-\eqref{n1a} is satisfied in the sense of  $\mathcal{D}'(\o\times [0,T)),$

(ii).  for  a.e. $t>0$,  the energy    inequality
\be\la{1.6} \frac{d}{dt}\int_{\o}\mathcal{E}(x,t)dx+ \int_{\o}\Lambda^{2}dx\le 0   \ee
and the entropy inequality \be\la{1.7}  \frac{d}{dt}\int_{\o}\n\left|\u+\na \ln\n\right|^{2}dx+  \int_{\o}\left(|\na \n^{\frac{\g}{2}}|^{2}+\Lambda^{2}\right)dx\le 0  \ee  are fulfilled,
where  the energy density   \be\la{1.8} \mathcal{E}  = \left(\frac{1}{2}\n |\u|^{2}+\frac{1}{\g-1}\n^{\g}\right)  \ee
and the function $\Lambda\in L^{2}(\o\times (0,T))$ satisfying \be\la{1.9}\ba \int_{0}^{T}\int_{\o} \Lambda   \Phi =-\int_{0}^{T}\int_{\o}\left(\sqrt{\n}\sqrt{\n}\u \div \Phi+2\sqrt{\n}\u  \Phi \na\sqrt{\n}\right) \ea\ee for all  smooth functions $\Phi$ valued in $\mathbb{R}^{n\times n}.$
\end{definition}

The regularity of weak solutions to \eqref{n1} is  an important and interesting question.
 We remark that the \emph{inequality sign} $"\le"$ in \eqref{1.6}  means   anomalous dissipation of energy, which  is mainly caused by   the wildness of the weak
solutions.  The energy dissipation is one of basic properties of the fluid equations  related
to its physical origin.  It is motivated  from Kolmogorov's theory of turbulence of flow and also reminiscent of the Leray-Hopf weak solutions to
the incompressible Navier-Stokes equations.  Clearly,  if the  solution is smooth enough, then  energy equality conserves automatically.    Yu   \cite{yu2} obtained a sufficient condition so that  the weak solutions to the equations  \eqref{n1} preserve  energy conservation  for all positive time. The main idea in \cite{yu2} is using  the  commutator estimates developed by  DiPerna-Lions \cite{di} to deal with the nonlinear term $\p_{t}(\n\u)$.    For other regularity criterion for energy conservation of weak solutions for  compressible fluids, we mention  the papers \cite{liang,liang1,wang} and the references cited therein.

\bigskip

\subsection{Main results}
The main concern of this current article is   the regularity   and the  long-time asymptotic  of the weak solutions to the problem \eqref{n1}-\eqref{n1a}.

Our main results read in the  theorems below.
\begin{thm} \la{t1}  Let   $\o$ be  a bounded domain with periodic boundaries  in $\r$, i.e., $\o=\mathbb{T}^{n}$ with   $n=2,3$. Let   $(\n,\u)$ be a weak solution to  the  problem \eqref{n1}-\eqref{n1a} in   Definition \ref{defi}.

 Then there exist  positive  constants   $\underline{C}$,   $\overline{C}$, and  some large time point  $T<\infty$,  which depend only on the initial functions and $\o$,   such that
 \be\la{1.10} 0< \underline{C}  \le   \n(x,t)\le \overline{C}<\infty,\quad a.e.\,\,\,(x,t)\in \o\times (T,\infty)\ee
 and \be\la{1.11} \u\in L^{\infty}\left(T,\infty; L^{\infty}(\o)\right),\quad \sqrt{\n} \na\u\in L^{2}\left(T,\infty; L^{2}(\o)\right).\ee
Furthermore,  the following  assertions are valid:

(i).(Energy conservation) For almost all  $ t_{1}>T,$ and all  $t \in [t_{1},\infty)$, the  solution $(\n,\u)$ to \eqref{n1}-\eqref{n1a} keeps energy conservation, i.e.,
 \be\la{1.12} \int_{\o}\mathcal{E}(x,t)dx+ \int_{t_{1}}^{t}\int_{\o}\n|\na\u|^{2}dxdt  =\int_{\o}\mathcal{E}(x,t_{1})dx, \ee
 where $ \mathcal{E}$ is defined in \eqref{1.8}.

 (ii)(Exponential asymptotic) There exist  constants $\sigma$ and   $C$ which   rely only  on the initial functions and $\o$, such that
\be\la{1.13}\|\u-(\u)_{a}\|_{L^{2}}^{2} +  \| \n-\overline{\n}\|_{L^{2}}^{2} \le Ce^{-\sigma t}, \quad t> T,\ee
where
\be\la{1.14} \overline{\n}=\frac{\int_{\o} \n_{0}(x)dx}{|\o|}>0\quad {\rm and}\quad (\u)_{a}=\frac{\int_{\o} \m_{0}(x)dx}{\int_{\o} \n_{0}(x)dx}.\ee  \end{thm}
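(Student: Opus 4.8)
The plan is to turn the two available a priori estimates into pointwise density bounds, and then bootstrap these into full velocity regularity, energy conservation, and exponential decay. Integrating the energy inequality \eqref{1.6} shows $E(t):=\int_\o\mathcal E\,dx$ is nonincreasing, whence $\int_0^\infty\!\int_\o\Lambda^2<\infty$; integrating the entropy inequality \eqref{1.7} shows the Bresch--Desjardins functional $\int_\o\n|\u+\na\ln\n|^2$ is nonincreasing and that $\int_0^\infty\!\int_\o|\na\n^{\g/2}|^2<\infty$, together with the uniform bound on $\na\sqrt{\n}$ in \eqref{1.5}. Testing \eqref{n1} against constants on $\o=\mathbb T^n$ conserves mass and momentum, fixing $\overline{\n}$ and $(\u)_a$ from \eqref{1.14} as the only equilibrium; the corresponding relative energy $\int_\o(\tfrac12\n|\u-(\u)_a|^2+\tfrac1{\g-1}[\n^\g-\overline{\n}^\g-\g\overline{\n}^{\g-1}(\n-\overline{\n})])$ is nonnegative by convexity and, by \eqref{1.6}, nonincreasing. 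The key structural observation I would exploit is that, writing $\mathbf v:=\u+\na\ln\n$, the continuity equation is equivalent to the parabolic identity
\be\la{para} \p_t\n-\lap\n=-\div(\n\mathbf v), \ee
whose drift $\mathbf v$ is controlled precisely by the BD functional $\int_\o\n|\mathbf v|^2$ appearing in \eqref{1.7}.

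The heart of the matter, and the step I expect to be the main obstacle, is the two-sided bound \eqref{1.10}, since $\na\sqrt{\n}\in L^\infty_tL^2_x$ pins down $\n$ only in $L^p_x$ ($n=2$) or $L^3_x$ ($n=3$) and says nothing pointwise, least of all about the absence of vacuum. I would read \eqref{para} as a linear parabolic equation for $\n$ with divergence-form drift $\n\mathbf v=\sqrt{\n}\,(\sqrt{\n}\,\mathbf v)$, and attack the two sides separately. For the upper bound I would run a De Giorgi/Moser iteration (the ``parabolic type iteration'' of the abstract) on \eqref{para}, using $\sqrt{\n}\in L^\infty_tH^1_x$ and $\sqrt{\n}\,\mathbf v\in L^\infty_tL^2_x$ to control the drift; for the lower bound I would invoke the parabolic Harnack inequality for \eqref{para}, which together with the conserved positive mass $\int_\o\n=\overline{\n}|\o|$ forces a strictly positive infimum after a finite time, thereby excluding re-formation of vacuum. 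The delicate point is that the drift $\n\mathbf v$ only lies in a borderline integrability class because of the degeneracy at vacuum, so neither iteration nor Harnack applies off the shelf; I would resolve this by a continuation/bootstrap: on any interval where $\underline C\le\n\le\overline C$ already holds the velocity equation is uniformly parabolic and improves the regularity of $\mathbf v$, which in turn sharpens the drift estimate in \eqref{para} and re-closes the density bounds on a slightly larger interval, propagating \eqref{1.10} to all $t>T$.

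Granting \eqref{1.10}, the remaining assertions follow in order. The velocity bound \eqref{1.11} is the output of the same parabolic iteration applied to
\be\la{veleq} \n\p_t\u+\n\u\cdot\na\u+\na\n^{\g}=\n\lap\u+\na\n\cdot\na\u, \ee
which is now uniformly parabolic because $\n$ is bounded away from $0$ and $\infty$, so a Moser scheme yields $\u\in L^\infty(T,\infty;L^\infty)$, while $\sqrt{\n}\na\u\in L^2$ is just the energy dissipation read through the lower bound on $\n$. For the energy conservation \eqref{1.12}, this regularity places $(\n,\u)$ in the class where the DiPerna--Lions commutator estimates handle $\p_t(\n\u)$ and the dissipation defect in \eqref{1.6} vanishes; rather than redo the commutator analysis I would invoke the sufficient condition of Yu \cite{yu2} to upgrade \eqref{1.6} to the equality \eqref{1.12}. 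Finally, for \eqref{1.13} I would build a Lyapunov functional from the relative energy plus a small multiple of the BD functional: with $\n$ bounded below, $\int_\o\n|\na\u|^2\ge\underline C\int_\o|\na\u|^2$ and Poincar\'e controls $\|\u-(\u)_a\|_{L^2}$, while $\int_\o|\na\n^{\g/2}|^2$ controls $\|\n-\overline{\n}\|_{L^2}$ through the bi-Lipschitz change of variables $\n\mapsto\n^{\g/2}$ and mass conservation; combining the dissipations in \eqref{1.6}--\eqref{1.7} then gives a differential inequality $\tfrac{d}{dt}\mathcal L+\si\mathcal L\le0$, and Gronwall yields \eqref{1.13}.
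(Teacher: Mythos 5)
Your architecture for the velocity bound, the energy equality, and the overall ordering of the steps matches the paper: once \eqref{1.10} is known, the paper also runs a Moser-type iteration on the momentum equation to get $\u\in L^{\infty}(T,\infty;L^{\infty})$, and also invokes Yu's sufficient condition \cite{yu2} (after checking $\u\in L^{4}_{loc}L^{6}$ and an $L^{q_0}$ bound at the initial time via Lebesgue points) rather than redoing the commutator analysis. But the central step, the two-sided density bound \eqref{1.10}, is exactly where your proposal has a genuine gap, and the paper's mechanism is entirely different from the one you describe.

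You propose to read $\p_t\n-\lap\n=-\div(\n\mathbf v)$ with $\mathbf v=\u+\na\ln\n$ and apply De Giorgi iteration for the upper bound and parabolic Harnack for the lower bound. The identity is correct, but, as you yourself note, the drift is only controlled through $\sqrt{\n}\,\mathbf v\in L^{\infty}_tL^{2}_x$ and $\sqrt{\n}\in L^{\infty}_tH^{1}_x$, so $\n\mathbf v$ sits at best in $L^{\infty}_tL^{3/2}_x$ in three dimensions, which is supercritical for both De Giorgi and Harnack; neither applies, and the constants would in any case degenerate near vacuum. The continuation/bootstrap you offer to repair this is circular: it improves the bound \emph{on an interval where $\underline C\le\n\le\overline C$ already holds}, but provides no starting interval, and producing that starting interval at some large time is precisely the content of \eqref{1.10}. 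The paper avoids all of this with a much more elementary oscillation-decay argument: truncate $\n$ to $\n_k$ as in \eqref{3.6} (so that $\na\n_k$ is supported where $\frac1k\le\n\le k$ and is controlled by $\na\n^{\g/2}$), then show both $\int_0^\infty\|\n_k-\overline{\n_k}\|_{L^p}^p\,dt<\infty$ and $\int_0^\infty\bigl|\frac{d}{dt}\|\n_k-\overline{\n_k}\|_{L^p}^p\bigr|\,dt<\infty$ using the global dissipation bounds \eqref{3.5} and the continuity equation. These two facts force $\|\n_k-\overline{\n_k}\|_{L^p}\to0$ as $t\to\infty$, and since $\n_k$ is uniformly bounded one can let $p\to\infty$ to get $\|\n_k-\overline{\n_k}\|_{L^\infty}\le\frac14$ for $t>T^*(k)$; combined with $\overline{\n_k}\in[\frac34,\frac54]$ this pinches $\n_k$, and hence $\n$, between $\frac12$ and $\frac32$. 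No parabolic regularity theory for the density is used or needed. If you want to salvage your route you must either find a genuinely subcritical formulation of the drift or replace Harnack by an argument that, like the paper's, exploits the time-integrability of the dissipation to show the oscillation of $\n$ dies out.

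A secondary issue: for the decay \eqref{1.13} your Lyapunov functional (relative energy plus a small multiple of the Bresch--Desjardins functional) does not close when $(\u)_a\neq0$, which Theorem \ref{t1} allows. Indeed $\int\n|\u+\na\ln\n|^2\to|(\u)_a|^2\int\n_0\neq0$ as $t\to\infty$, so this functional cannot be bounded above by $\|\u-(\u)_a\|_{L^2}^2+\|\n-\overline{\n}\|_{L^2}^2$, and the differential inequality $\frac{d}{dt}\mathcal L+\si\mathcal L\le0$ cannot hold. The paper instead perturbs the relative energy by the cross term $\de\int\n(\u-(\u)_a)\mathcal B(\n-\overline{\n})$ built from the Bogovskii operator; the pressure term tested against $\mathcal B(\n-\overline{\n})$ then supplies the missing dissipation $\de\int(\n^\g-\overline{\n}^\g)(\n-\overline{\n})\ge c\de\|\n-\overline{\n}\|_{L^2}^2$, and the cross term is genuinely small relative to $\|\u-(\u)_a\|_{L^2}^2+\|\n-\overline{\n}\|_{L^2}^2$, so both the equivalence \eqref{3.45} and the coercivity \eqref{3.47} hold.
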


\bigskip

 \begin{remark} Inequality  \eqref{1.10} shows that no concentration or     vacuum state  can be formed  after a finite period of  time, regardless of   what the initial state  is.  Moreover, the conservation of energy for weak solutions is automatically established.   \end{remark}

In the second theorem we derive   higher  regularity of  the weak solutions, if   the initial  momentum is  assumed to be zero.

\begin{thm} \la{t2} In addition to the hypotheses  in Theorem \ref{t1},   we assume that  the initial momentum is zero, namely,
\be\la{1.15} \left|\int_{\o} \m_{0}(x)dx\right|=0. \ee

 Then, for some $T_{1}\in (T,\infty)$,  there exist  constants  $\sigma_{1}\in (0,\sigma)$  and  $C$  depending     only  on the initial functions and $T_{1},\,\o,$  such that
\be\la{1.16}\|\na  \n \|_{L^{2}}^{2} \le Ce^{-\sigma_{1} t}, \quad t> T_{1}.\ee

Particularly,   $(\n,\u)$ becomes a   strong solution in two-dimensional space, satisfying the improved velocity regularity
 \be\la{1.17} \ba &\u\in L^{\infty}\left(T_{1},\infty;   H^{1}(\mathbb{T}^{2})\right) \cap L^{2}\left(T_{1},\infty; W^{2,\frac{3}{2}}(\mathbb{T}^{2})\right),\\
&\quad \qquad \u_{t}\in L^{2}\left(T_{1},\infty; L^{2}(\mathbb{T}^{2})\right), \ea\ee
and the long-time asymptotics
\be\la{1.18}\|\na  \n \|_{L^{2}(\mathbb{T}^{2})}^{2}+ \|\na \u \|_{L^{2}(\mathbb{T}^{2})}^{2} \le Ce^{-\sigma_{1} t}, \quad t> T_{1}.\ee \end{thm}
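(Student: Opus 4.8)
The plan is to treat the system on $(T,\infty)$ as a \emph{non-degenerate} problem: once \eqref{1.10} guarantees $0<\underline C\le\n\le\overline C$, the viscous operator $\div(\n\na\u)$ is uniformly elliptic, so the remaining task is to propagate and quantify regularity by energy and elliptic arguments rather than by the Bresch--Desjardins machinery used to construct the weak solution. I would first establish the density decay \eqref{1.16}. The zero-momentum hypothesis \eqref{1.15} makes $(\u)_a=0$, and since $\frac{d}{dt}\int_\o\n\u\,dx=0$ on $\mathbb T^n$ the total momentum stays zero for all $t$. Working with the entropy functional $F(t)=\int_\o\n|\u+\na\ln\n|^2dx$ from \eqref{1.7} and expanding $F=\int_\o\n|\u|^2+2\int_\o\u\cdot\na\n+\int_\o|\na\n|^2/\n$, the density bounds reduce the dissipation in \eqref{1.7} to $\int_\o|\na\n^{\g/2}|^2+\Lambda^2\gtrsim\int_\o|\na\n|^2+\int_\o\n|\na\u|^2$. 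A weighted Poincar\'e inequality (valid because $\int_\o\n\u=0$ and $\n$ is bounded above and below) controls $\int_\o\n|\u|^2$ by $\int_\o\n|\na\u|^2$, the cross term by Cauchy--Schwarz, and $\int_\o|\na\n|^2/\n$ directly by $\int_\o|\na\n^{\g/2}|^2$; hence $F\le C\,(\text{dissipation})$ and \eqref{1.7} yields $F'+cF\le0$. Exponential decay of $F$, together with $\int_\o|\na\n|^2\lesssim F+\int_\o\n|\u|^2$ and the known decay \eqref{1.13}, gives \eqref{1.16}.

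For the two-dimensional strong-solution statement I would first upgrade the velocity regularity. Rewriting the momentum equation of \eqref{n1} as $\n\u_t+\n\u\cdot\na\u+\na\n^\g=\div(\n\na\u)$ and testing with $\u_t$ produces
\[
\ba
\int_\o\n|\u_t|^2+\tfrac12\tfrac{d}{dt}\int_\o\n|\na\u|^2
=\tfrac12\int_\o\n_t|\na\u|^2-\int_\o\n(\u\cdot\na\u)\cdot\u_t-\int_\o\na\n^\g\cdot\u_t.
\ea
\]
In two dimensions I would control the convective term by $\|\u\|_{L^\infty}^2\int_\o\n|\na\u|^2$ using \eqref{1.11}, the term $\int_\o\n_t|\na\u|^2$ via $\n_t=-\div(\n\u)$ together with the Ladyzhenskaya inequality $\|\na\u\|_{L^4}^2\le C\|\na\u\|_{L^2}\|\na\u\|_{H^1}$, and the pressure term by integrating $\na\n^\g\cdot\u_t$ by parts in time and invoking continuity. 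Since Theorem \ref{t1} gives $\sqrt\n\,\na\u\in L^2(T,\infty;L^2)$, the map $t\mapsto\int_\o\n|\na\u|^2$ is integrable, so I may fix a time $T_1>T$ at which it is finite and run a Gr\"onwall argument from $T_1$; this yields $\u\in L^\infty(T_1,\infty;H^1)$ and $\u_t\in L^2(T_1,\infty;L^2)$, the first and third parts of \eqref{1.17}.

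With these bounds I would read the momentum equation as the elliptic system $\n\lap\u=\n\u_t+\n\u\cdot\na\u+\na\n^\g-\na\n\cdot\na\u$. The three leading terms lie in $L^2(T_1,\infty;L^2)$, while the borderline term $\na\n\cdot\na\u$ is estimated by H\"older from $\na\n\in L^\infty_tL^2$ and $\na\u\in L^6$; since in two dimensions $W^{1,3/2}\hookrightarrow L^6$, the estimate closes self-consistently with right-hand side in $L^2(T_1,\infty;L^{3/2})$, and elliptic regularity for the bounded coefficient $\n$ then yields the remaining part $\u\in L^2(T_1,\infty;W^{2,3/2})$ of \eqref{1.17}. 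The exponent $3/2$ is forced precisely by this product, and \emph{this is the main obstacle}: the density is only of class $H^1$, not Lipschitz, so the classical $H^2$ elliptic estimate is unavailable and one must balance integrabilities carefully and bootstrap through $W^{2,3/2}$ rather than $H^2$; the convective closure in two dimensions also relies essentially on the $L^\infty$ bound \eqref{1.11} for $\u$.

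Finally, for \eqref{1.18} I would return to the differential inequality for $G(t)=\int_\o\n|\na\u|^2$. The pressure contribution is now exponentially small by \eqref{1.16}, the coercivity $-cG$ comes from the non-degenerate viscous dissipation combined with Poincar\'e (using that $\na\u$ has zero mean on $\mathbb T^2$), and the convective coefficient is absorbed using the decay of $\|\u\|_{L^2}$ from \eqref{1.13} interpolated against the higher norm from \eqref{1.17}. This produces $G'+\sigma_1 G\le Ce^{-\sigma_1 t}$ for a suitable $\sigma_1\in(0,\sigma)$; since Theorem \ref{t1} already gives $G\in L^1(T_1,\infty)$, a Gr\"onwall argument with exponential weight yields $\|\na\u\|_{L^2}^2\le Ce^{-\sigma_1 t}$, and combined with \eqref{1.16} this is exactly \eqref{1.18}. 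I expect the delicate point here to be matching the rates so that $\sigma_1<\sigma$ while keeping all forcing terms genuinely $O(e^{-\sigma_1 t})$.
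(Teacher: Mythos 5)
Your treatment of \eqref{1.16} is essentially the paper's argument in a slightly different packaging: both rest on the entropy inequality \eqref{1.7}, the uniform density bounds \eqref{1.10}, and the Poincar\'e inequality $\|\u\|_{L^{2}}\le C\|\na\u\|_{L^{2}}$ made available by the conserved zero momentum. You close a Gronwall inequality on the Bresch--Desjardins functional itself, whereas the paper multiplies \eqref{1.7} by $e^{\sigma_{1}t}$ and feeds in the already-established decay $e^{\sigma t}\|\u\|_{L^{2}}^{2}\le C$ from \eqref{3.48}; either route works, and yours needs \eqref{1.13} only at the very end to convert decay of the functional into decay of $\|\na\n\|_{L^{2}}$.

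The two-dimensional part, however, has two genuine gaps. First, you propose to handle $\int\n_{t}|\na\u|^{2}$ with the Ladyzhenskaya inequality $\|\na\u\|_{L^{4}}^{2}\le C\|\na\u\|_{L^{2}}\|\na\u\|_{H^{1}}$, which requires $\na^{2}\u\in L^{2}$ --- precisely the regularity you yourself concede is unavailable, since only $W^{2,3/2}$ is reachable. The energy estimate therefore cannot be closed in the order you propose (energy first, elliptic regularity afterwards); the $W^{2,3/2}$ elliptic bound must be established first and coupled into the $\u_{t}$-energy identity, as the paper does via $\int\n_{t}|\na\u|^{2}=\int\n\u\cdot\na|\na\u|^{2}\le C\|\n\u\|_{L^{\infty}}\|\na^{2}\u\|_{L^{3/2}}\|\na\u\|_{L^{3}}$. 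Second, and more seriously, closing the elliptic estimate ``self-consistently'' from $\na\n\in L^{\infty}(T,\infty;L^{2})$ alone fails: the borderline term obeys $C\|\na\n\|_{L^{2}}\|\na\u\|_{L^{6}}\le C\|\na\n\|_{L^{2}}\left(\|\na^{2}\u\|_{L^{3/2}}+\|\na\u\|_{L^{2}}\right)$, and the piece proportional to $\|\na^{2}\u\|_{L^{3/2}}$ can be absorbed into the left-hand side only if $C\|\na\n\|_{L^{2}}<1$, i.e.\ one needs \emph{smallness}, not boundedness, of $\|\na\n\|_{L^{2}}$. This is exactly where \eqref{1.16} enters and why $T_{1}$ must be taken large: the paper waits until $C_{4}e^{-\sigma_{1}t}\le\frac{1}{2}$ before absorbing (see \eqref{4.5}--\eqref{4.6}). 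Finally, your claimed coercivity $-cG$ for $G=\int\n|\na\u|^{2}$ ``from the viscous dissipation plus Poincar\'e'' is unsubstantiated --- testing with $\u_{t}$ produces the dissipation $\int\n|\u_{t}|^{2}$, which does not control $G$; the correct mechanism (and the paper's) is the one you mention as a fallback, namely multiplying \eqref{4.8} by $e^{\sigma_{1}t}$ and using the integrated bounds $\int e^{\sigma t}\|\na\u\|_{L^{2}}^{2}\,dt\le C$ and $\int e^{\sigma_{1}t}\|\na\n\|_{L^{2}}^{2}\,dt\le C$.
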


   \bigskip

 \begin{remark} Regularity \eqref{1.17} implies that $(\n,\u)$  satisfies the equations \eqref{n1} for almost everywhere in $\o\times (T_{1},\infty)$ in two-dimensional space.  In this regard,  we call $(\n,\u)$  a   strong solution. However,   whether the regularity \eqref{1.17} guarantees the   uniqueness  or not is unclear.   \end{remark}

 \begin{remark}  By slight modification,   Theorems   \ref{t1}-\ref{t2}   work  for general bounded domain with suitable boundary conditions, as long as the  existence of weak solutions is known.\end{remark}

\bigskip

\subsection{Methodology} Let us give a brief,  heuristic overview of the proof and explain  the underlying   motivations.

\bigskip

We are mainly motivated from the one-dimensional results in \cite{llx,sz}, the global existence of weak solutions in \cite{lx,yu}, and the regularity criterion in \cite{des,liang,yu2}.

We first  approximate the density in \eqref{3.6}  by $\n_{k}$   so that for   large number $k$ \be\la{1.19} \left[\frac{3}{4},\, \frac{5}{4}\right]\ni \overline{\n_{k}}=\frac{1}{|\o|}\int_{\o}\n_{k}(x,t)dx.\ee
(In the proof  we assume that $\frac{1}{|\o|}\int_{\o}\n_{0}(x)dx=\int_{\o}\n_{0}(x)dx=1$.) We remark that the  approximate function $\n_{k}$    is to
  avoid  the possible  concentration and  vacuum. Notice that $\na\n \in L^{\infty}\left(0,T;L^{2}\right)$, we have  $\p \n_{k}=  \p \n$ in  $\left\{(x,t):\, \frac{1}{k}\le \n\le k\right\}$ and $\p \n_{k}= 0$ for others. Therefore,    $ \na  \n_{k}  \in L^{\infty}(0,\infty;L^{2}(\o))\cap L^{2}(0,\infty;L^{2}(\o)).$
A careful computation shows that,  for any fixed   $k<\infty,$   \be\la{1.20} \lim_{t\rightarrow \infty}\|\n_{k}-\overline{\n_{k}}\|_{L^{\infty}(\o)}=0.\ee
By  \eqref{1.20} we see   that  $\n_{k}(x,t)$ stablizes to its average value $\overline{\n_{k}}$ when   the  time $t$ goes to infinity. In particular,  there is some finite $T^{*}=T^{*}(k)<\infty$ such that
\bnn \|(\n_{k}-\overline{\n_{k}})(\cdot,t)\|_{L^{\infty}(\o)}\le \frac{1}{4},\quad{\rm for\,\,all}\,\, t>T^{*}.\enn
This, along with  \eqref{1.19}, implies that for $t>T^{*}$
\bnn   \frac{1}{2}= \frac{3}{4}-\frac{1}{4}\le  \overline{\n_{k}}-\frac{1}{4}\le \n_{k}\le  \overline{\n_{k}}+\frac{1}{4}\le \frac{5}{4}+\frac{1}{4}= \frac{3}{2},\enn that is, \be\la{1.21}\frac{1}{2}\le \n_{k}\le \frac{3}{2},\quad a.e.\,\,\, (x,t)\in\o\times (T^{*}, \infty).\ee
Remember that \eqref{3.6},  for large $k<\infty$ we deduce from \eqref{1.21} that
\bnn\frac{1}{2}\le \n\le \frac{3}{2},\quad a.e.\,\,\, (x,t)\in\o\times (T^{*}, \infty).\enn  Similar argument runs for the general case, and thus  \eqref{1.10} is proved.

Once \eqref{1.10} is obtained, the momentum equations can be regarded as a parabolic system in terms of velocity $\u$. Hence, we are motivated to expolit   a parabolic type    iteration technique as well as a  continuity method to conclude  \eqref{1.11}.  The former part in   \eqref{1.11} directly yields $\u\in L^{4}_{loc}(T,\infty;L^{6}(\o))$.  This and  \eqref{1.10}   enable us to deduce  that    the weak solution conserves it energy equality,  following  the same step as   in \cite{yu2}.   Finally,  by means of  the  inverse operator estimates  and basic energy estimates, we show that   the  solution   decays exponentially to its   equilibrium in the sense of  $L^{2}(\o)$ topology   as time goes to infinity.

\smallskip

 In Theorem \ref{t2}, we  obtain better regularity if the initial momentum is assumed to be zero. In particular, in case of
 \eqref{1.15}, we first deduce from  \eqref{1.13} that   $e^{\sigma t}\|\u\|_{L^{2}}^{2}  \le C$. By this and the   entropy inequality \eqref{1.7} we successfully proved that   $\|\na \n\|_{L^{2}}$ has  an exponential decay rate as  time tends to infinity.  Next, in deriving  the higher derivative estimates, we need to overcome the difficulty resulting from  the density appeared  in the diffusion.  An important observation is  the    Sobolev embedding in dimension two, i.e.,    $\|\na^{2} \u\|_{L^{\frac{3}{2}}}\hookrightarrow \|\na \u\|_{L^{6}}.$ This together with the time decay  of $\|\na \n(\cdot,t)\|_{L^{2}}$ guarantee    higher velocity regularity    in two dimensional case, and consequently,  an  exponential decay  of  $L^{2}$-norm  of the velocity  derivative.

\bigskip

 \textbf{Notation:}   Refer to \cite{adams},   we denote   the standard  Sobolev spaces   by
\bnn\ba  L^{p}=L^{p}(\o),\quad  W^{k,p}=\{f:\,\,|\p^{i}  f|\in L^{p},\,\,0\le i\le k\},\quad H^{k}=W^{k,2},
\ea\enn
and  use the simplified conventions
  \bnn \int f=\int_{\o}f(x)dx,\quad  \overline{f}=\frac{1}{|\o|}\int f,\enn
  where  $|\o|$  is  the Lebesgue measure of $\o$.
Throughout this paper, the capital letters $C ,\, C_{i} \,(i=1,2,..)$ symbol  positive   constants  which may vary from line to line.  In addition,   $C(a)$ is used to emphasize that $C$ depends on $a.$

\bigskip

\section{Preliminaries}

The first lemma is responsible for  the existence results   of weak solutions to the problem \eqref{n1}-\eqref{n1a}.
\begin{lem}[See \cite{lx,yu}]\la{lem2.1} For any $T<\infty,$ the problem \eqref{n1}-\eqref{n1a} admits a weak solution $(\n,\u)$ over $(0,T)$ in the sense of Definition \ref{defi}, which satisfies \eqref{1.5}-\eqref{1.7}.
 \end{lem}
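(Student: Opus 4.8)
The plan is to reconstruct the global existence theory along the lines of Bresch-Desjardins \cite{bres,bres1}, Mellet-Vasseur \cite{mv}, and Li-Xin/Vasseur-Yu \cite{lx,yu}. Because the viscosity $\n$ degenerates at vacuum, a Faedo-Galerkin scheme applied directly to \eqref{n1} cannot close, so I would first introduce a multi-parameter regularized system. Concretely, I would append to the momentum equation a linear drag $-r_{0}\u$, a nonlinear drag $-r_{1}\n|\u|^{2}\u$, a capillary (Bohm) term $\kappa\n\na\big(\frac{\lap\sqrt{\n}}{\sqrt{\n}}\big)$, and an artificial viscosity $\eps\lap\u$, together with an $\eps$-regularized continuity equation. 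For this fully regularized problem smooth approximate solutions can be produced by a Galerkin construction, so that the remaining work is a finite sequence of limit passages.

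The heart of the matter is a set of a priori bounds uniform in the regularization parameters. First I would test the momentum equation against $\u$ to recover the basic energy identity, producing the energy density $\mathcal{E}$ of \eqref{1.8} and the dissipation $\int\n|\na\u|^{2}$. The decisive step is the Bresch-Desjardins entropy estimate \eqref{1.4}: for the present viscosity one takes $h(\n)=\n$, which forces $\n\vp'=h'=1$, i.e.\ $\vp=\ln\n$, and testing against $\na\vp(\n)=\na\ln\n$ yields control of $\na\sqrt{\n}$ in $L^{\infty}(0,T;L^{2})$ and of $\na\n^{\g/2}$ in $L^{2}(0,T;L^{2})$, precisely the regularity recorded in \eqref{1.5} and the entropy inequality \eqref{1.7}. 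To these I would adjoin a Mellet-Vasseur type estimate, obtained by testing against a renormalized velocity such as $\u\big(1+\ln(1+|\u|^{2})\big)$, which bounds a slightly super-quadratic moment $\int\n|\u|^{2+\nu}$ and supplies the extra integrability needed for the convective nonlinearity.

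With these bounds I would pass to the limit by compactness. The density converges strongly because the BD estimate gives $\na\sqrt{\n}\in L^{\infty}(0,T;L^{2})$ while the continuity equation supplies time regularity, so Aubin-Lions yields strong convergence of $\sqrt{\n}$, hence of $\n$ and $\n^{\g}$, and the pressure term passes to the limit directly. The momentum and the energy/entropy inequalities \eqref{1.6}-\eqref{1.7} are recovered by weak lower semicontinuity, which is exactly why only the sign "$\le$" survives. The object $\Lambda$ is defined as the weak $L^{2}(\o\times(0,T))$ limit of the regularized diffusion, and the defining identity \eqref{1.9} is obtained by writing $\n\na\u$ symbolically as $\sqrt{\n}\,(\sqrt{\n}\na\u)$ and integrating by parts so that one factor of $\sqrt{\n}$ is transferred onto the test tensor $\Phi$; this avoids ever asserting that $\na\u$ makes classical sense on the vacuum set.

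The main obstacle, as in every degenerate-viscosity result, is the strong compactness of $\sqrt{\n}\u$ required to pass to the limit in $\div(\n\u\otimes\u)=\div\big((\sqrt{\n}\u)\otimes(\sqrt{\n}\u)\big)$. On the vacuum set $\u$ itself is uncontrolled, so weak convergence of $\sqrt{\n}\u$ cannot identify the limit of the quadratic product. Overcoming this is precisely the role of the Mellet-Vasseur moment bound: combined with the strong convergence of $\sqrt{\n}$ and a truncation/uniform-integrability argument, it upgrades $\sqrt{\n}\u$ to strong convergence in $L^{2}$, closing the nonlinear term. The remaining layers, sending $\kappa$, then $r_{1}$, $r_{0}$, and finally $\eps$ to zero, are each handled with the same uniform estimates, taking care that the capillary term, which furnishes extra $\n$-derivative control at positive $\kappa$, enters the inequalities with a favorable sign and may therefore be discarded in the limit.
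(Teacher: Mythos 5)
You should first be aware that the paper contains no proof of this lemma at all: it is quoted verbatim from Li--Xin \cite{lx} and Vasseur--Yu \cite{yu}, so the only meaningful comparison is against those references. Measured against them, your outline reproduces the correct architecture --- multi-layer regularization (drag terms, capillarity, artificial viscosity), the basic energy identity, the Bresch--Desjardins entropy \eqref{1.4} with $h(\n)=\n$ and $\vp=\ln\n$ giving \eqref{1.5} and \eqref{1.7}, a Mellet--Vasseur moment to handle the convection, strong compactness of $\sqrt{\n}$ via Aubin--Lions, and the definition of $\Lambda$ through the identity \eqref{1.9} so that $\na\u$ is never evaluated on the vacuum set. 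That is a faithful table of contents of the cited proofs.

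It is, however, a table of contents and not a proof, and the step you dispose of in one sentence is precisely the one that required the main new ideas in \cite{lx,yu}. The Mellet--Vasseur estimate is \emph{not} obtained by simply ``testing against $\u\bigl(1+\ln(1+|\u|^{2})\bigr)$'': that multiplier is admissible only for sufficiently smooth approximate solutions, and the difficulty is that the estimate must survive every limit passage while the drag and capillary terms that made the approximation solvable are being removed. In \cite{yu} the recovery of this super-quadratic bound at the level of weak solutions is the central contribution of the paper, carried out through a renormalization of the momentum equation with carefully chosen cut-off functions; in \cite{lx} the layers of the approximation are engineered in a specific order precisely so that the multiplier is legitimate where it is used and the resulting bound is uniform in the parameters being sent to zero. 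Relatedly, the BD entropy computation is not compatible with an arbitrary $\eps\lap\u$ and an arbitrary regularized continuity equation: the two regularizations must be matched so that the extra commutator terms either cancel or carry a favorable sign, and your sketch does not check this. Since the strong $L^{2}$ convergence of $\sqrt{\n}\u$ --- which you correctly identify as the crux --- hinges entirely on the uniform super-quadratic moment, the gap is not cosmetic: without a genuine argument for that estimate the limit in $\div(\n\u\otimes\u)$ does not close, and the lemma is not proved.
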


 The following  embedding inequalities   will be frequently used throughout this paper.
 \begin{lem}[Gagliardo-Nirenberg]\cite{adams,lad} \la{lem2.2} Let $\Omega\subseteq\mathbb{R}^{n}$ be a  bounded   domain with   smooth boundaries. It holds that  for any $v\in W^{1,q}\cap L^{s}$
\be\la{2.1}\ba \|v\|_{L^{p}}\le C_{1}\|v\|_{L^{s}}+ C_{2}\|\na v\|_{L^{q}}^{\th}\|v\|_{L^{s}}^{1-\th},\ea\ee where   the constant $C_{i} (i=1,2)$  depends  only  on $p,q,s, \th,$    the exponents satisfy \bnn 0\le \th\le 1,\quad 1\le q,\,s\le \infty ,\quad \frac{1}{p}=\th(\frac{1}{q}-\frac{1}{n})+(1-\th)\frac{1}{s}\enn  and
\bnn\left\{
  \begin{array}{ll}
\min\{s,\,\frac{nq}{n-q}\}\le p\le \max\{s,\,\frac{nq}{n-q}\}, & {\rm if}\,\,q<n; \\
s\le p<\infty, &  {\rm if}\,\,q=n;\\
s\le p\le \infty, &  {\rm if}\,\,q>n.
  \end{array}
\right.\enn
Moreover, $C_{1}=0$  if  0-Dirichlet boundary condition or zero average is assumed.

As an application of \eqref{2.1}, we have
\be\la{2.2}\ba \|v-\overline{v}\|_{L^{p}}\le  C \|\na v\|_{L^{\frac{np}{n+p}}} ,\quad p\in [1,6]\ea\ee
and
\be\la{2.3} \ba  \left( \int_{-r}^{r}\int|v^{2}|^{\frac{5}{3}}dxdt\right)^{\frac{3}{5}} \le C\left(\sup_{t\in [-r,r]}\int |v|^{2}dx+ \int_{-r}^{r}\int \left(|v |^{2}  +|\na v|^{2}\right)dxdt\right).\ea \ee
 \end{lem}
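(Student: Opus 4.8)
The inequality \eqref{2.1} is the classical Gagliardo--Nirenberg interpolation quoted from \cite{adams,lad}, so the content to be established is really the two consequences \eqref{2.2} and \eqref{2.3}. For \eqref{2.2}, the plan is to apply \eqref{2.1} to the mean-free function $w=v-\overline{v}$. Since $w$ has zero average, the last sentence of the lemma forces $C_{1}=0$, leaving only the gradient term. I would then take $\th=1$ together with the exponent $q=\frac{np}{n+p}$; a short computation gives $\frac{1}{q}-\frac{1}{n}=\frac{1}{p}$, so the compatibility relation $\frac{1}{p}=\th(\frac1q-\frac1n)+(1-\th)\frac1s$ holds with the $L^{s}$-term absent, and one checks $q<n$ and $\frac{nq}{n-q}=p$, so that $p$ is exactly the admissible Sobolev exponent. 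This yields $\|v-\overline v\|_{L^{p}}=\|w\|_{L^{p}}\le C\|\na w\|_{L^{q}}=C\|\na v\|_{L^{np/(n+p)}}$, which is \eqref{2.2} on the range of $p$ for which $q\ge 1$; the few remaining small values of $p$ are absorbed by H\"{o}lder's inequality on the bounded domain $\o$, comparing $\|w\|_{L^{p}}$ with $\|w\|_{L^{p_{0}}}$ for an admissible $p_{0}$.

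For \eqref{2.3}, I first rewrite the left-hand side: since $|v^{2}|^{\frac53}=|v|^{\frac{10}{3}}$ and $\frac{10}{3}\cdot\frac35=2$, it equals $\|v\|_{L^{10/3}(\o\times(-r,r))}^{2}$, and $\frac{10}{3}=2+\frac4n$ with $n=3$ is the scaling-critical parabolic exponent; the strategy is the usual $L^{\infty}_{t}L^{2}_{x}\cap L^{2}_{t}H^{1}_{x}\hookrightarrow L^{10/3}_{t,x}$ embedding. Slicing in time, \eqref{2.1} with $q=s=2$ and $n=3$ gives $\th=\frac35$, hence $\|v\|_{L^{10/3}(\o)}\le C\|v\|_{L^{2}}^{2/5}(\|v\|_{L^{2}}+\|\na v\|_{L^{2}})^{3/5}$ for a.e. $t$. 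Raising to the power $\frac{10}{3}$ and integrating over $t\in(-r,r)$, I pull the slowly varying factor out as $\sup_{t}\|v\|_{L^{2}}^{4/3}$ and keep $\int_{-r}^{r}(\|v\|_{L^{2}}^{2}+\|\na v\|_{L^{2}}^{2})\,dt$ inside the integral. Raising the outcome to the power $\frac35$ produces a product $A^{2/5}B^{3/5}$ with $A=\sup_{t}\|v\|_{L^{2}}^{2}$ and $B=\int_{-r}^{r}(\|v\|_{L^{2}}^{2}+\|\na v\|_{L^{2}}^{2})\,dt$; since $\frac25+\frac35=1$, Young's inequality converts this into $\frac25 A+\frac35 B\le A+B$, which is precisely the right-hand side of \eqref{2.3}.

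Neither step presents a genuine obstacle, both being standard interpolation arguments; the only points requiring care are the use of the zero-average condition to discard $C_{1}$ in \eqref{2.2}, and, in \eqref{2.3}, the exponent bookkeeping that makes the two Young exponents sum to one. I also note that the exponents in \eqref{2.3} are tuned to $n=3$; in the two-dimensional case the same chain of inequalities (or the stronger Ladyzhenskaya embedding into $L^{4}_{t,x}$) delivers the estimate \emph{a fortiori}, so \eqref{2.3} holds uniformly for $n=2,3$.
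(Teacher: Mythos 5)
Your argument for \eqref{2.3} is essentially the paper's own proof: the paper first interpolates $\|v\|_{L^{10/3}}^{10/3}\le \|v\|_{L^{2}}^{4/3}\|v\|_{L^{6}}^{2}$ by H\"{o}lder and then invokes the Sobolev embedding $H^{1}\hookrightarrow L^{6}$, which is exactly your single application of \eqref{2.1} with $q=s=2$, $\th=\frac{3}{5}$, and both proofs conclude identically by pulling out $\sup_{t}\|v\|_{L^{2}}^{4/3}$, raising to the power $\frac{3}{5}$, and applying Young's inequality. The paper offers no proof of \eqref{2.2}, and your Sobolev--Poincar\'{e} derivation is the standard one; the only loose end is that for $p$ so small that $\frac{np}{n+p}<1$ your H\"{o}lder comparison actually bounds $\|v-\overline{v}\|_{L^{p}}$ by $\|\na v\|_{L^{1}}$ rather than by the stated quasi-norm, but that range of $p$ is never used in the paper.
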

 \begin{proof} We only prove  \eqref{2.3}.   In fact, by \eqref{2.1} and h\"{o}lder inequality,
 \bnn\ba & \int_{-r}^{r}\int|v^{2}|^{\frac{5}{3}}dxdt \\
 &\le \sup_{t\in [-r,r]}\left(\int|v^{2}|^{\frac{2}{3}}dx \right) \int_{-r}^{r}\int|v^{2}|dxdt \\
 &\le C \sup_{t\in [-r,r]}\left(\int|v^{2}|dx\right)^{\frac{2}{3}} \int_{-r}^{r} \left(\int|v|^{6}dx \right)^{\frac{1}{3}}dt \\
  &\le C \sup_{t\in [-r,r]}\left(\int|v^{2}|dx\right)^{\frac{2}{3}} \int_{-r}^{r} \int\left(|v |^{2}  +|\na v|^{2}\right)dxdt. \ea\enn
Raising the above expression  to the power of $\frac{3}{5}$, applying the Young  inequality, we obtain \eqref{2.3}.
 \end{proof}

 \begin{lem}\la{lem2.3} Let $\o\subset \r$ be   a bounded domain with smooth boundaries.  Then, for all  $v\in W^{k,p}(\o)\cap W^{1,p}_{0}(\o)$ with $p\in (1,\infty)$ and integer $k\ge 0$,  there    exists a positive constant
$C$ which  depends  only on $p,\,n,\,k$ such that
\be\la{p50}\|\na^{k+2}v\|_{L^{p}}\le C\|\lap v\|_{W^{k,p}}\ee
 \end{lem}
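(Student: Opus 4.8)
The plan is to read \eqref{p50} as the global $L^{p}$ a priori estimate (together with the accompanying regularity) for the Dirichlet Laplacian, interpreted in the standard way: whenever $v\in W^{1,p}_{0}(\o)$ and $\lap v\in W^{k,p}(\o)$, then in fact $v\in W^{k+2,p}(\o)$ and \eqref{p50} holds. I would obtain it from Calder\'{o}n--Zygmund theory combined with the invertibility of $\lap$ on $W^{2,p}\cap W^{1,p}_{0}(\o)$. First I would settle the base case $k=0$, namely $\|\na^{2}v\|_{L^{p}}\le C\|\lap v\|_{L^{p}}$. The classical Agmon--Douglis--Nirenberg (equivalently Calder\'{o}n--Zygmund) estimate on a smooth bounded domain gives $\|v\|_{W^{2,p}}\le C(\|\lap v\|_{L^{p}}+\|v\|_{L^{p}})$ for every $v\in W^{2,p}\cap W^{1,p}_{0}$ and $p\in(1,\infty)$. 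Since $\lam=0$ is not a Dirichlet eigenvalue of $-\lap$ on a bounded domain, the map $\lap:W^{2,p}\cap W^{1,p}_{0}\to L^{p}$ is an isomorphism (injectivity via the maximum principle, closed range from the a priori bound, surjectivity from the method of continuity), so the lower-order term is redundant and $\|v\|_{W^{2,p}}\le C\|\lap v\|_{L^{p}}$, which is \eqref{p50} with $k=0$.

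For $k\ge 1$ I would argue by induction, assuming \eqref{p50} at order $k-1$ and writing $f=\lap v\in W^{k,p}$. In the interior the estimate is immediate from $\p_{i}\p_{j}=R_{i}R_{j}\lap$ and the $L^{p}$-boundedness of the Riesz transforms $R_{i}$, yielding $\|\na^{k+2}v\|_{L^{p}(\o')}\le C\|f\|_{W^{k,p}}$ on any compactly contained $\o'$. Near $\p\o$ I would flatten the boundary and exploit that first-order \emph{tangential} derivatives preserve the homogeneous condition $v|_{\p\o}=0$: differentiating $\lap v=f$ in tangential directions and invoking the case $k-1$ controls every derivative of $v$ of order $k+2$ except the purely normal ones, while the missing normal derivatives are recovered algebraically from the equation, schematically $\p_{\nu}^{2}v=f-\lap^{\tau}v$, with $\lap^{\tau}$ the already-estimated tangential part. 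Gluing the interior and boundary bounds by a finite partition of unity and absorbing the lower-order remainders $\|v\|_{W^{k+1,p}}$ through the induction hypothesis and the base case delivers \eqref{p50} at order $k$. Alternatively one may simply quote the full Agmon--Douglis--Nirenberg regularity theorem, which produces all $k$ simultaneously because the Dirichlet problem for $\lap$ satisfies the complementing (Lopatinski--Shapiro) condition.

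The interior part is routine, so the genuine obstacle is the boundary analysis. After flattening $\p\o$ the Laplacian becomes a variable-coefficient elliptic operator, and one must freeze coefficients, verify that the frozen half-space model retains ellipticity and the complementing condition, and control the error terms produced both by the freezing and by the partition of unity. The delicate point is the bookkeeping of these lower-order errors so that at each stage they are of strictly lower order and can be absorbed by the induction hypothesis rather than fed back into the top-order term. This is exactly the content of the ADN machinery, which I would either cite directly or realize through the tangential difference-quotient argument sketched above.
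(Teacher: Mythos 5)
Your proposal is correct and follows the same route as the paper, which simply cites the Agmon--Douglis--Nirenberg elliptic regularity theory \cite{ag} (and \cite[Lemma 12]{kim1}) without further argument. Your additional detail --- the $k=0$ base case via the isomorphism $\lap:W^{2,p}\cap W^{1,p}_{0}\to L^{p}$, the interior Riesz-transform bound, and the tangential-derivative induction near the boundary --- is exactly the standard content of that cited machinery.
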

 \begin{proof} The proof is a classical elliptic regularity theories in \cite{ag}. See also \cite[Lemma 12]{kim1}. \end{proof}

\begin{lem}[Bogovskii]\la{lem2.4}
Let $\o\subset \r$ be a bounded Lipschitz domain, and let $p\in (1,\infty)$. There is a linear operator $\mathcal{B}= (\mathcal{B}^{1},...,\mathcal{B}^{n})$ from $L^{p}$ to $W_{0}^{1,p}$ such that for all $v\in L^{p}$ with  $\overline{v}=0,$
  \bnn \div \mathcal{B}(v)=v\quad a.e.\,\,{\rm in}\,\,\o\quad {\rm and}\quad \|\na \mathcal{B}(v)\|_{L^{p}}\le C(p,n,\o)\|v\|_{L^{p}}.\enn\end{lem}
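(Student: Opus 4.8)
The plan is to reduce the general bounded Lipschitz domain to the model case of a domain that is star-shaped with respect to a ball, where Bogovskii's explicit construction is available, and then to patch the local solutions together. First I would fix a cutoff $\omega\in C_{c}^{\infty}(B)$ with $\int \omega =1$, where $B\subset\o$ is a ball with respect to which $\o$ is star-shaped. For $v\in L^{p}(\o)$ with $\overline{v}=0$ I would define
\be
\mathcal{B}(v)(x)=\int_{\o} v(y)\, N(x,y)\, dy,
\ee
with the Bogovskii kernel
\be
N(x,y)=\frac{x-y}{|x-y|^{n}}\int_{|x-y|}^{\infty}\omega\!\left(y+\xi\frac{x-y}{|x-y|}\right)\xi^{n-1}\, d\xi .
\ee
Star-shapedness guarantees that the inner radial integral is well defined and that the support structure needed for $\mathcal{B}(v)\in W_{0}^{1,p}$ holds.

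Second, I would verify the divergence identity $\div\mathcal{B}(v)=v$ directly. Splitting $N$ into its singular leading part and a smooth remainder and differentiating under the integral sign, the computation reduces, using $\int\omega=1$ and the fundamental theorem of calculus in the radial variable, to $\div\mathcal{B}(v)=v-\omega\int v$; since $\overline{v}=0$ forces $\int v=0$, the correction term drops and the desired identity follows. The homogeneous boundary condition $\mathcal{B}(v)=0$ on $\p\o$ comes from the same star-shaped support bookkeeping.

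Third, and this is the analytic heart, I would establish the a priori bound $\|\na\mathcal{B}(v)\|_{L^{p}}\le C\|v\|_{L^{p}}$. Differentiating the kernel, one finds that $\na_{x}N(x,y)$ has a principal part of the form $K(x,x-y)$ that is homogeneous of degree $-n$ in $x-y$, has vanishing mean over spheres in the second variable, and satisfies the Calderón–Zygmund smoothness estimates, together with a remainder kernel that is only weakly singular of order $|x-y|^{1-n}$ and hence defines a bounded operator on $L^{p}$ by Young's inequality. The $L^{p}$ boundedness of the principal, non-convolution singular integral then follows from the Calderón–Zygmund theory for variable-kernel singular integrals, and a Poincaré-type control of the lower-order part, combined with $\overline{v}=0$, upgrades the gradient estimate to the full $W_{0}^{1,p}$ bound.

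Finally, for a general bounded Lipschitz domain I would use the standard covering argument: $\o$ may be written as a finite union of subdomains $\o_{i}$, each star-shaped with respect to a ball. Taking a partition of unity $\{\chi_{i}\}$ subordinate to this cover, I would solve $\div w_{i}=\chi_{i}v-c_{i}$ on each $\o_{i}$ by the model construction, with constants $c_{i}$ enforcing zero average on $\o_{i}$, then correct the accumulated average-mismatch terms by finitely many further applications of $\mathcal{B}$, and set $\mathcal{B}(v)=\sum_{i}w_{i}$ extended by zero. The main obstacle is the third step: verifying that the differentiated Bogovskii kernel genuinely meets the Calderón–Zygmund cancellation and regularity conditions, so that the singular-integral estimate holds with a constant depending only on $p$, $n$, and the geometry of $\o$; the patching in the last step is routine once the star-shaped case is in hand. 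Since the result is entirely classical, the paper may equally well quote it as a standard fact rather than reproduce this argument.
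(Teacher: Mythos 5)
Your outline is the classical Bogovskii construction --- the explicit kernel on a domain star-shaped with respect to a ball, the Calder\'on--Zygmund estimate for the differentiated kernel, and the partition-of-unity patching for a general bounded Lipschitz domain --- which is precisely the argument in the monograph \cite{nov} that the paper cites; the paper itself offers no proof beyond that one-line citation. Your sketch is correct in all essentials and follows the same (indeed, the standard) route, so there is nothing further to compare.
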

\begin{proof} The detailed proof is available in \cite{nov}.\end{proof}

\bigskip

\section{Proof of Theorem \ref{t1}}

First,  for fixed $t>0$, we  define
\bnn \psi(s)=\left\{\ba &1,& s\le t,\\
&k(t+\frac{1}{k}-s),&t\le s\le t+\frac{1}{k},\\
&0,&s\ge t+\frac{1}{k}.\ea\right.\enn
Since $\o=\mathbb{T}^{n}$ is periodic,  we test the mass equation  $\eqref{n1}_{1}$ against  $\psi(t)$ and receive
 \bnn  \int_{t}^{t+\frac{1}{k}}\int\n(x,s)=\int\n_{0}.\enn
By  Lebesgue point theorem,    sending $k\rightarrow \infty$ in above equality yields
 \be\la{3.1}\int \n(x,t)=\int \n_{0}(x),\quad a.\,e.\,\,\,\,t>0.\ee

Similar argument runs that  \be\la{3.2}\int \n\u(x,t)=\int \m_{0}(x),\quad a.\,e.\,\,\,\,t>0.\ee

\subsection{Positive bounds of density} We will show  that  the density is positively  bounded from below  after a finite time interval.   Without loss of generality,   we  assume
\be\la{3.3} |\o|=1\quad {\rm and} \quad  \int_{\o}\n_{0}=1,\ee
and prove that, for some large $T^{*}<\infty,$
 \be\la{3.4} \frac{1}{2}\le \n \le \frac{3}{2},\quad a.e.\quad (x,t)\in \o\times (T^{*},\infty).\ee
For more general case,   the same deduction  yields     \eqref{1.10}.

\bigskip

The main task of this subsection is to justify \eqref{3.4}.

\bigskip

By virtue of Lemma \ref{lem2.1},  the weak solution $(\n,\u)$  satisfies  \eqref{1.5}, that is,
 \be\la{3.5}\ba &\sup_{t\ge0}\left\|\na \sqrt{\n}\right\|_{L^{2}}+\int_{0}^{\infty}  \left( \left\|\na \n^{\frac{\g}{2}}\right\|_{L^{2}}^{2}+\| \sqrt{\n}\na \u \|_{L^{2}}^{2}\right)dt \le C.\ea\ee
Here, and  during  this subsection,  the constant $C$ is  independent of $t.$

 Let us approximate the density $\n$ by
\be\la{3.6}  \n_{k} =\left\{\ba &k,&k\le \n,\\
&\n,& \frac{1}{k}\le  \n\le k,\\
&\frac{1}{k},& \n\le \frac{1}{k}.\ea \right.\ee
Due to \eqref{3.1} and \eqref{3.3}, it has $\n\in L^{\infty}(0,\infty;L^{1}),$ which implies that  $\n$ is bounded almost everywhere in  $\o\times (0,\infty)$.  Thus,  \bnn
 \lim_{k\rightarrow \infty} \n_{k}\rightarrow \n\quad  a.e. \quad \o\times (0,\infty).\enn  By   Lebesgue Dominated Convergence theorem, one has
\bnn \lim_{k\rightarrow \infty}\int \n_{k} =\int \n  =1.\enn
Therefore, for some  large number  $k$,
\be\la{3.7}\frac{3}{4} \le  \int \n_{k} \le \frac{5}{4}.\ee
It is clear from  \eqref{3.5} and  \eqref{3.6}  that
\be\la{3.8} \p   \n_{k}  = \left\{\ba
& \p \n,&{\rm in}\,\,\,\left\{(x,t):\, \frac{1}{k}\le \n\le k\right\}, \\
& 0,& {\rm in}\,\,\,\left\{(x,t):\, \n\le \frac{1}{k}\,\,{\rm or}\,\, \n\ge k\right\},\ea \right. \ee
where $\p=\p_{t}$ or $\p=\p_{x_{i}}.$
Moreover,
  by  \eqref{3.8} and \eqref{2.2},    it satisfies that
 \be\la{3.9}\n_{k}\in L^{\infty}(0,\infty; H^{1}) \cap  L^{2}(0,\infty; H^{1}) \ee  and that
  for all $p\in [2,\infty)$
\be\la{3.10}\ba  \left\|\n_{k}-\overline{\n_{k}}\right\|_{L^{p}}^{p}
&\le C\|\n_{k}\|_{L^{\infty}}^{p-2}\left\|\n_{k}-\overline{\n_{k}}\right\|_{L^{2}}^{2}\\
& \le C  \left\|\n_{k}-\overline{\n_{k}}\right\|_{L^{2}}^{2}\\
&\le C  \|\na \n_{k}\|_{L^{2}}^{2}\\
 &\le C \left\|\na \n \right\|_{L^{2}(\{x: \frac{1}{k}\le \n\le k\})}^{2}\\
  &\le C \left\|\na \n^{\frac{\g}{2}} \right\|_{L^{2}(\{x: \frac{1}{k}\le \n\le k\})}^{2}\\
&\le C \left\|\na \n^{\frac{\g}{2}}\right\|_{L^{2}}^{2},\ea\ee where $\overline{\n_{k}}$ is the average function of $\n_{k}$, and  the constant $C=C(p,k)$ depends on $p$ and $k.$
Integrating    \eqref{3.10} in time  and using   \eqref{3.5}, we get
 \be\la{3.11}\ba
  \int_{0}^{\infty}\left\| \n_{k} - \overline{\n_{k}} \right\|_{L^{p}}^{p} dt
  \le C(p,k). \ea\ee

\bigskip

We next prove that
   \be\la{3.12} \ba & \int_{0}^{\infty}   \left|\frac{d}{dt}\left\|\n_{k} - \overline{\n_{k}}\right\|_{L^{p}}^{p}\right| dt \le C(p,k).\ea\ee
For this purpose,   we  compute
 \be\la{3.13}\ba &\frac{d}{dt}\left\|\n_{k} - \overline{\n_{k}}\right\|_{L^{p}}^{p}\\
 &= p\frac{d}{dt} \overline{\n_{k}} \int
 \left|\n_{k}-\overline{\n_{k}}\right|^{p-2}\left(\overline{\n_{k}}-\n_{k}\right) +p \int \left|\n_{k}-\overline{\n_{k}}\right|^{p-2}\left(\n_{k}-\overline{\n_{k}}\right)  \p_{t}\n_{k}.\ea\ee
By virtue of  \eqref{3.9},  we are allowed to take in \eqref{1.9} the test function $\Phi=\n_{k}^{-\frac{1}{2}}\widetilde{\Phi}$ with any smooth function $\widetilde{\Phi}$  to deduce that
\be\la{3.14}\ba  &\int_{0}^{\infty}  \int  \n_{k}^{-\frac{1}{2}}\Lambda \widetilde{\Phi}\\
    & =-\int_{0}^{\infty}\int \sqrt{\n_{k}}\sqrt{\n_{k}}\u \div(\n_{k}^{-1} \widetilde{\Phi})-2\int_{0}^{\infty} \n_{k}^{-\frac{1}{2}}\u \na\sqrt{\n_{k}}\widetilde{\Phi}  \\
  & =-\int_{0}^{\infty}\int \u  \div\widetilde{\Phi}.\ea\ee
From  \eqref{3.14}  we see  that $\na\u $  is well  defined  in $ L^{2}$ in weak sense.   By uniqueness, it has
 \be\la{3.15}\na \u=\frac{\Lambda}{\sqrt{\n_{k}}}\in L^{2}(0,\infty; L^{2}).\ee

In view of  \eqref{1.5}, \eqref{3.8},  \eqref{3.15},  and the mass equation  $\eqref{n1}_{1}$, it satisfies
 \bnn\ba  \left|\frac{d}{dt} \overline{\n_{k}}\right|&=\left|\int   \p_{t}\n_{k}\right|\\
 &\le  \int_{\{x: \frac{1}{k}\le \n\le k\}} \left|\p_{t}\n \right|  \\
 &\le C\int_{\{x: \frac{1}{k}\le \n\le k\}} \left(|\u\cdot\na\n|+|\n_{k} \div \u|\right)\\
 &\le C\left(\|\sqrt{\n}\u\|_{L^{2}} \| \na \sqrt{\n}\|_{L^{2}(\{x: \frac{1}{k}\le \n\le k\})}+\|\n_{k}\|_{L^{1}}^{\frac{1}{2}}\|\sqrt{\n_{k}}\na \u \|_{L^{2}}\right) \\
&\le  C\left(\left\|\na \n^{\frac{\g}{2}} \right\|_{L^{2}(\{x: \frac{1}{k}\le \n\le k\})}+ \|\sqrt{\n_{k}} \na \u \|_{L^{2}}\right)\\
&\le  C\left(\left\|\na \n^{\frac{\g}{2}} \right\|_{L^{2}}+   \| \sqrt{\n_{k}}\na \u \|_{L^{2}}\right).
 \ea\enn
With the help of \eqref{3.6},  \eqref{3.10}, and  the  above inequality, we estimate the first integral on the right-hand side of \eqref{3.13} as
 \be\la{3.16}\ba &\left|p\frac{d}{dt} \overline{\n_{k}} \int
 \left|\n_{k}-\overline{\n_{k}}\right|^{p-2}\left(\overline{\n_{k}}-\n_{k}\right)\right|\\
  &\le C\left(\left\|\na \n^{\frac{\g}{2}}\right\|_{L^{2}}+   \| \sqrt{\n_{k}}\na \u \|_{L^{2}}\right)\int\left|
 \overline{\n_{k}}-\n_{k}\right| \\
  &\le C\left(\left\|\na \n^{\frac{\g}{2}}\right\|_{L^{2}}+   \| \sqrt{\n_{k}}\na \u \|_{L^{2}}\right)\left\|\overline{\n_{k}}-\n_{k}\right\|_{L^{2}} \\
  &\le   C\left(\left\|\na \n^{\frac{\g}{2}}\right\|_{L^{2}} +   \| \sqrt{\n_{k}}\na \u \|_{L^{2}} \right)\left\|\na \n^{\frac{\g}{2}}\right\|_{L^{2}}.\ea\ee

Next,  observe from \eqref{1.5},
   \eqref{3.15},  and Sobolev inequality that
 \be\la{3.17} \ba &\| \u\|_{L^{r}(\{x: \frac{1}{k}\le \n\})}\\
 &\le C\left(\|\u\|_{L^{2}(\{x: \frac{1}{k}\le \n\})}+\| \na \u\|_{L^{2}(\{x: \frac{1}{k}\le \n\})}\right)\\
 &\le C\left(\| \sqrt{\n}\u\|_{L^{2}(\{x: \frac{1}{k}\le \n\})}+\|\sqrt{\n_{k}} \na \u\|_{L^{2}}\right)\\
 &\le C\left(\| \sqrt{\n}\u\|_{L^{2}}+\|\sqrt{\n_{k}} \na \u\|_{L^{2}}\right)\\
 &\le C \left(1+\|\sqrt{\n_{k}}\na \u\|_{L^{2}}\right),\qquad \forall\quad r\in [2,6].\ea\ee
 Then,  by   \eqref{1.5},  \eqref{3.6}, \eqref{3.10}, \eqref{3.17},   and  $\eqref{n1}_{1}$, the last integral in  \eqref{3.13} satisfies
\be\la{3.18} \ba & \left|p \int \left|\n_{k}-\overline{\n_{k}}\right|^{p-2}\left(\n_{k}-\overline{\n_{k}}\right)  \p_{t}\n_{k}\right|\\
 &\le  C\int_{\{x: \frac{1}{k}\le \n\le k\}}   \left|\n_{k}-\overline{\n_{k}}\right|\left( |\n_{k} \div  \u|+|\u\na \n|\right) \\
 &\le C  \left\|\n_{k}-\overline{\n_{k}}\right\|_{L^{2}}\|\sqrt{\n_{k}}\na \u\|_{L^{2}}\\
 &\quad+C \left\|\n_{k}-\overline{\n_{k}}\right\|_{L^{3}} \|\na\n\|_{L^{2}(\{x:\frac{1}{k}\le \n\le k\})} \|\u\|_{L^{6}(\{x: \frac{1}{k}\le \n\le k\})} \\
 &\le  C \left\| \na \n^{\frac{\g}{2}}\right\|_{L^{2}}\|\sqrt{\n_{k}}\na \u\|_{L^{2}} \\
 &\quad+  C\| \na \sqrt{\n} \|_{L^{2}(\{x: \frac{1}{k}\le \n\le k\})}\|\na \n  \|_{L^{2}(\{x:\frac{1}{k}\le \n\le k\})}\left(1+\|\sqrt{\n_{k}}\na \u\|_{L^{2}}\right)\\
 &\le  C \left\| \na \n^{\frac{\g}{2}}\right\|_{L^{2}} \|\sqrt{\n_{k}}\na \u\|_{L^{2}}\\
 &\quad+C\| \na \sqrt{\n} \|_{L^{2}(\{x: \frac{1}{k}\le \n\le k\})}\|\na \n  \|_{L^{2}(\{x:\frac{1}{k}\le \n\le k\})}+  C \|\na \n  \|_{L^{2}(\{x:\frac{1}{k}\le \n\le k\})}\|\sqrt{\n_{k}}\na \u\|_{L^{2}}\\
 &\le C \left\| \na \n^{\frac{\g}{2}}\right\|_{L^{2}}^{2} +C\|\sqrt{\n_{k}}\na \u\|_{L^{2}}^{2}.\ea\ee

Therefore,   substituting   \eqref{3.16} and \eqref{3.18} back into \eqref{3.13},   integrating  the resultant  expression,   using \eqref{3.5} and \eqref{3.15}, we obtain
\bnn\ba &\int_{0}^{\infty}\left|\frac{d}{dt}\left\|\n_{k}-\overline{\n_{k}}\right\|_{L^{p}}^{p}\right|dt\\
 &\le C\int_{0}^{\infty}\left(\left\| \na \n^{\frac{\g}{2}}\right\|_{L^{2}}^{2} + \|\sqrt{\n_{k}}\na \u\|_{L^{2}}^{2}\right)dt\\
 &\le C.\ea\enn   This proves   \eqref{3.12}.

 \bigskip

It follows from  \eqref{3.11} and  \eqref{3.12} that
\be\la{3.19}\lim_{t\rightarrow \infty}\left\|\n_{k}-\overline{\n_{k}}\right\|_{L^{p}} =0,\quad \forall\,\,\,\, p\in [2,\infty).\ee
By  \eqref{3.6}, it has   $\n_{k}-\overline{\n_{k}}\in L^{\infty}(0,\infty;L^{\infty}).$ Thus,  sending $p\rightarrow \infty$ in \eqref{3.19} yields
  \bnn &\lim_{t\rightarrow \infty} \left\|\n_{k}-\overline{\n_{k}}\right\|_{L^{\infty}} \le \underline{\lim}_{p \rightarrow \infty}\lim_{t\rightarrow \infty}\left\|\n_{k}-\overline{\n_{k}}\right\|_{L^{p}}  =0,\enn
which implies, for some large   $T^{*}=T^{*}(k)<\infty$,
\be\la{3.20}  \ba& \left\| \n_{k}-\overline{\n_{k}}\right\|_{L^{\infty}}  \le \frac{1}{4},\qquad \forall\,\,\,t\in (T^{*},\infty).\ea\ee
By  \eqref{3.20} one deduces
\be\la{3.21}\ba \n_{k} \ge \overline{\n_{k}}-\frac{1}{4}
&=\int \n_{k} -\frac{1}{4}\\
&\ge  \frac{3}{4}-\frac{1}{4} =\frac{1}{2},\qquad a.e.\,\,\,\,(x,t)\in \o\times (T^{*},\infty),\ea\ee
where the last inequality owes to \eqref{3.7}.
For another hand,    from  \eqref{3.7} and    \eqref{3.20}  we have
\be\la{3.22}\ba  \n_{k}
 \le \overline{\n_{k} }+\frac{1}{4}
&= \int \n_{k}  +\frac{1}{4} \\
& \le \frac{5}{4} +\frac{1}{4} =  \frac{3}{2}, \quad a.e.\,\,\,\,(x,t)\in \o\times (T^{*},\infty).\ea\ee
The combination of  \eqref{3.21} with \eqref{3.22} shows that for some large but finite $k<\infty,$ there is a time point $T^{*}=T^{*}(k)<\infty$ such that
\be\la{3.23}\ba \frac{1}{2} \le  \n_{k} \le  \frac{3}{2}, \quad a.e.\,\,\,\,(x,t)\in \o\times (T^{*},\infty).\ea\ee
Utilizing  \eqref{3.6} once more,  we conclude \eqref{3.4} from \eqref{3.23}.
The proof is finished.

\bigskip

\subsection{Velocity regularity}
This subsection   aims  to improve the velocity regularity. Our  approach is using  an iteration technique.

\bigskip

For fixed $t_{0}>T^{*},$  we   define   the    cut-off function of the form
\be\la{3.24} \left\{\ba &0\le \xi(t)\in C^{1}(\mathbb{R}),\\
&\xi(t)\equiv0\,\,\,{\rm if}\,\,\,t\in (\infty,t_{0}-r'],\quad \xi(t)\equiv1\,\,\,{\rm if}\,\,\,t\in [t_{0}-r,\infty),\\
& |\xi'|\le \frac{2}{|r^{'}-r|},\quad
 \frac{t_{0}-T^{*}}{2}\le r<r'\le t_{0}-T^{*}.\ea\right. \ee

 If we test the momentum equations $\eqref{n1}_{2}$ against $\xi |\u|^{\beta}\u$ with $\beta\ge 0$, utilize $\eqref{n1}_{1}$ and  the Cauchy inequality, integrate over $\o\times (-\infty,2t_{0})$,  we infer
\be\la{3.25}\ba & \frac{1}{2+\beta}\sup_{t\in [t_{0}-r ,2t_{0}]}\int  \n|\u|^{2+\beta}\\
&\quad + \int_{t_{0}-r'}^{2t_{0}}\xi \int\left( \n|\u|^{\beta}|\na \u|^{2} +\beta  \n|\u|^{\beta}|\na |\u|  |^{2}\right) \\
&=\frac{1}{2+\beta} \int_{t_{0}-r'}^{2t_{0}}\p_{t}\xi\int\n|\u|^{2+\beta} +\int_{t_{0}-r'}^{2t_{0}}\xi \int   \n^{\g} {\rm div}\left(  |\u|^{\beta}\u\right)  \\
&\le  \frac{1}{2+\beta} \int_{t_{0}-r'}^{\tau}|\p_{t}\xi|\int \n|\u|^{2+\beta} \\
&\quad + \frac{1}{2}\int_{t_{0}-r'}^{2t_{0}}\xi \int\left( \n|\u|^{\beta}|\na \u|^{2} +\beta  \n|\u|^{\beta}|\na |\u|  |^{2}\right) +C\beta
 \int_{t_{0}-r'}^{2t_{0}}\int  \n^{2\g-1}|\u|^{\beta}.\ea\ee
Thanks to    \eqref{1.10} and \eqref{3.24},  the   \eqref{3.25}  satisfies
  \be\la{3.26}\ba & \sup_{t\in [t_{0}-r,2t_{0}]}\int  |\u|^{2+\beta} + (1+\beta)\int_{t_{0}-r}^{2t_{0}} \int \left( |\u|^{\beta}|\na \u|^{2} +  |\na |\u|^{\frac{2+\beta}{2}}|^{2}\right) \\
&\le    C \frac{(2+\beta)^{2}}{|r'-r|}
 \left(2+\int_{t_{0}-r'}^{2t_{0}}\int |\u|^{2+\beta}\right).\ea\ee

By  Sobolev inequality \eqref{2.3}, it takes
\bnn\ba &\left(\int_{t_{0}-r}^{2t_{0}}  \int |\u|^{\frac{5}{3}(2+\beta)} \right)^{\frac{3}{5}}\\
&\le C\left(\sup_{t\in [t_{0}-r,2t_{0}]}  \int |\u|^{(2+\beta)}+\int_{t_{0}-r}^{2t_{0}} \int \left|\na |\u|^{\frac{\beta+2}{2}}\right|^{2}+ \int_{t_{0}-r'}^{2t_{0}} \int|\u|^{\beta+2}\right) .\ea\enn
Making use of   \eqref{3.26}, we estimate the above inequality as
\be\la{3.27}\ba
  \left(\int_{t_{0}-r}^{2t_{0}}  \int |\u|^{\frac{5}{3}(2+\beta)} \right)^{\frac{3}{5}} \le  C \frac{(2+\beta)^{2}}{|r'-r|}
 \left(1+\int_{t_{0}-r'}^{2t_{0}}\int |\u|^{2+\beta}\right).\ea\ee

\bigskip

Next,  we apply  the  Moser-type iteration    to deduce   the desired \eqref{1.11}.

Select
  \bnn 2+\beta=\left(\frac{5}{3}\right)^{k},\quad  r^{'}=r_{k}, \quad   r=r_{k+1}=\frac{(t_{0}-T^{*})}{2}\left(1+\frac{1}{2^{k+1}}\right).\enn
From \eqref{3.27} we receive that, for some  constant $C=C(t_{0},\,T^{*}),$
 \be\la{3.28}\ba&
  \left(\int_{t_{0}-r_{k+1}}^{2t_{0}}  \int |\u|^{\left(\frac{5}{3}\right)^{k+1}} \right)^{\left(\frac{3}{5}\right)^{k+1}}\\
   &\le  C \left( 2\cdot\frac{5}{3}\right)^{2k\cdot \left(\frac{3}{5}\right)^{k}}
 \left[1+\int_{t_{0}-r_{k}}^{2t_{0}}\int |\u|^{\left(\frac{5}{3}\right)^{k}}\right]^{\left(\frac{3}{5}\right)^{k}}\\
 &\le  C3^{2k\cdot \left(\frac{3}{5}\right)^{k}} \cdot 2^{   \left(\frac{3}{5}\right)^{k}}
 \left[1+\left(\int_{t_{0}-r_{k}}^{2t_{0}}\int |\u|^{\left(\frac{5}{3}\right)^{k}}\right)^{\left(\frac{3}{5}\right)^{k}}\right]\\
 &\le  C  3^{3k\cdot \left(\frac{3}{5}\right)^{k}}
 \left[1+\left(\int_{t_{0}-r_{k}}^{2t_{0}}\int |\u|^{\left(\frac{5}{3}\right)^{k}}\right)^{\left(\frac{3}{5}\right)^{k}}\right],  \ea\ee
where  the constant $C$ is independent of  $ k.$

We proceed \eqref{3.28} in two cases:

 {\it Case 1.} If there exists an infinite  sequence  to satisfy
\be\la{3.29} 1\ge \left(\int_{t_{0}-r_{k}}^{2t_{0}}\int |\u|^{\left(\frac{5}{3}\right)^{k}}\right)^{\left(\frac{3}{5}\right)^{k}}.\ee
 Then \be\la{3.30}\ba &\|\u\|_{L^{\infty}\left(\frac{(t_{0}+T^{*})}{2},2t_{0};L^{\infty} \right)}\\
  &=\lim_{k\rightarrow \infty}\left(\int_{\frac{(t_{0}+T^{*})}{2}}^{2t_{0}}\int |\u|^{\left(\frac{5}{3}\right)^{k}}\right)^{\left(\frac{3}{5}\right)^{k}}\\
  &\le \lim_{k\rightarrow \infty}\left(\int_{t_{0}-r_{k}}^{2_{0}}\int |\u|^{\left(\frac{5}{3}\right)^{k}}\right)^{\left(\frac{3}{5}\right)^{k}}\le  1.\ea\ee

 {\it Case 2.} While if  \eqref{3.29} is false, there should be a finite number  $k_{0}\ge 1$ such that  \eqref{3.28} satisfies, for all $k\ge k_{0},$
\be\la{3.31}\ba
\left(\int_{t_{0}-r_{k+1}}^{2t_{0}}  \int |\u|^{\left(\frac{5}{3}\right)^{k+1}} \right)^{\left(\frac{3}{5}\right)^{k+1}} \le  C3^{3k\cdot \left(\frac{3}{5}\right)^{k}}
 \left(\int_{t_{0}-r_{k}}^{2t_{0}}\int |\u|^{\left(\frac{5}{3}\right)^{k}}\right)^{\left(\frac{3}{5}\right)^{k}}.\ea\ee
By  deduction, it gives  from  \eqref{3.31} that
  \be\la{3.32}\ba \left(\int_{t_{0}-r_{k+1}}^{2t_{0}}  \int |\u|^{\left(\frac{5}{3}\right)^{k+1}} \right)^{\left(\frac{3}{5}\right)^{k+1}}
  &\le   C^{a}3^{b} \left(\int_{t_{0}-r_{k}}^{2t_{0}}\int |\u|^{\left(\frac{5}{3}\right)^{k}} \right)^{\left(\frac{3}{5}\right)^{k}},\quad \forall\,\,\,k\ge k_{0},\ea\ee
  with   \bnn a\le \sum_{k=1}^{\infty} \left(\frac{3}{5}\right)^{k}<\infty,\quad b\le 3\sum_{k=1}^{\infty}k\cdot \left(\frac{3}{5}\right)^{k}<\infty.\enn
Taking limit  $k\rightarrow\infty$   in \eqref{3.32} yields
\be\la{3.33}\ba  &\|\u\|_{L^{\infty}\left(\frac{(t_{0}+T^{*})}{2},2t_{0};L^{\infty}\right)}\\&\le C\lim_{k\rightarrow \infty}\left(\int_{t_{0}-r_{k}}^{2t_{0}}\int |\u|^{\left(\frac{5}{3}\right)^{k}} \right)^{\left(\frac{3}{5}\right)^{k}}\\
&\le C\left(\int_{t_{0}-r_{k_{0}}}^{2t_{0}}\int |\u|^{\left(\frac{5}{3}\right)^{k_{0}}} \right)^{\left(\frac{3}{5}\right)^{k_{0}}}\\
&\le C(k_{0})\left(1+\int_{\frac{t_{0}+T^{*}}{2}}^{2t_{0}} \int |\u|^{2+\beta}\right),\ea\ee
where  the last inequality is  from a   finite many time iteration  in  \eqref{3.28}.

Therefore, the  combination of  \eqref{3.30} with   \eqref{3.33} generates
 \be\la{3.34}\ba \|\u\|_{L^{\infty}\left(\frac{(t_{0}+T^{*})}{2},2t_{0};L^{\infty} \right)}& \le  C+C \int_{\frac{t_{0}+T^{*}}{2}}^{2t_{0}} \int |\u|^{2+\beta}.\ea\ee
In terms of   \eqref{1.5}, \eqref{1.10}, and Sobolev inequality \eqref{2.1}, we choose $\beta\in [0,4]$ in \eqref{3.34}
 and obtain
 \be\la{3.35}\ba &\|\u\|_{L^{\infty}\left(\frac{(t_{0}+T^{*})}{2},2t_{0};L^{\infty} \right)}\\& \le  C+C \int_{\frac{t_{0}+T^{*}}{2}}^{2t_{0}} \int |\u|^{2+\beta}\\
 &\le  C+C t_{0} \sup_{t\ge t_{0}} \int |\u|^{2}+C \int_{\frac{t_{0}+T^{*}}{2}}^{2t_{0}} \int |\na \u|^{2}\\
 &\le  C+C t_{0} \sup_{t\ge 0} \int \n|\u|^{2}+C \int_{0}^{\infty} \int \n|\na \u|^{2}\\
 &\le C.\ea\ee

Finally, repeating  the above argument over the intervals $[n t_{0},((n+1)t_{0})]$ for $n=2,3...$, we obtain \eqref{1.11},  the required.

\bigskip

\subsection{Energy conservation} As a result of   \eqref{1.10} and  \eqref{1.11},
we  show  that    the weak solution $(\n,\u)$    conserves its energy equality for all $t>T^{*}$.

\bigskip

First, owing to \eqref{1.10}, the exactly same deduction of    \eqref{3.15} yields  for some $t_{1}>T^{*}$
\be\la{3.36}\Lambda =\sqrt{\n} \na\u\in L^{2}\left(t_{1},\infty;L^{2}\right). \ee
This proves the latter part in \eqref{1.11}. Moreover, by    \eqref{1.6}, it satisfies
\be\la{3.37}  \int \mathcal{E}(x,t)dx+  \int_{t_{1}}^{t}\int  \n|\na\u|^{2} \le \int \mathcal{E}(x,t)dx,\quad \forall\,\,\,\, t\ge t_{1}. \ee
To be continued, we present the following proposition
  \begin{prop}\la{p} For  almost all  $t_{1}>T^{*}$ and for all $t \in [t_{1},\infty)$,  we have  equality sign in \eqref{3.37}, that is to say, the desired \eqref{1.12}  is true, provided that
  \be\la{3.38}\ba & 0<\underline{C} \le \n(x,t)\le \overline{C}<\infty, \quad {\rm in}\quad \o\times (t_{1},\infty),\\
&\qquad\qquad \u\in L^{4}\left(t_{1},t ;L^{6}\right),\\
&\quad \|\u(\cdot,t_{1})\|_{L^{q_{0}}}\le  C\quad {\rm for\,\,some}\quad q_{0}>3.\ea\ee
\end{prop}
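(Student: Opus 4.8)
The plan is to adapt the regularization-and-commutator argument of Yu \cite{yu2}: mollify the equations in the spatial variable, test the momentum balance against $\u$, and send the mollification parameter to zero, discarding every error term via the DiPerna--Lions commutator lemma \cite{di}. The hypotheses \eqref{3.38} are tailored to this. Since $\n$ is bounded above and below on $\o\times(t_1,\infty)$, the available bounds $\sqrt{\n}\,\na\u\in L^2$ and $\na\n^{\g/2}\in L^2$ upgrade to $\na\u\in L^2(t_1,t;L^2)$ and $\na\n\in L^2(t_1,t;L^2)$, and the continuity equation becomes renormalizable in the sense of \cite{di}. These are precisely the regularities that feed the commutator estimates.

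Writing $f^{\eps}=f*\eta_{\eps}$ for a standard mollifier, I would first record the regularized balances
\begin{equation*}
\p_t\n^{\eps}+\div(\n\u)^{\eps}=0,\qquad \p_t(\n\u)^{\eps}+\div(\n\u\otimes\u)^{\eps}+\na(\n^{\g})^{\eps}=\div(\n\na\u)^{\eps},
\end{equation*}
and test the second against $\u$ over $\o\times(t_1,t)$. The pressure and viscous contributions are the benign ones. After one integration by parts, $\int\na(\n^{\g})^{\eps}\cdot\u\to-\int\n^{\g}\div\u$, and applying the renormalized continuity equation to $\beta(\n)=\frac{1}{\g-1}\n^{\g}$ (for which $\n\beta'(\n)-\beta(\n)=\n^{\g}$) identifies this with $\frac{d}{dt}\int\frac{1}{\g-1}\n^{\g}$; likewise $\int\div(\n\na\u)^{\eps}\cdot\u\to-\int\n|\na\u|^{2}$. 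Both limits rest only on strong convergence of $(\n^{\g})^{\eps}$ and $(\n\na\u)^{\eps}$ in $L^2(t_1,t;L^2)$ paired against $\div\u,\na\u\in L^2(t_1,t;L^2)$.

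The crux is the inertial pair $\int\p_t(\n\u)^{\eps}\cdot\u+\int\div(\n\u\otimes\u)^{\eps}\cdot\u$, which should reconstruct $\frac{d}{dt}\int\frac12\n|\u|^{2}$. Rewriting the mollified continuity equation as $\p_t\n^{\eps}+\div(\n^{\eps}\u)=r^{\eps}$ with commutator $r^{\eps}=\div(\n^{\eps}\u)-\div(\n\u)^{\eps}$, and pairing the inertial flux with $\u$, the identity reduces to showing
\begin{equation*}
\int_{t_1}^{t}\!\!\int r^{\eps}\,\tfrac12|\u|^{2}\to0 \qquad\text{and}\qquad \int_{t_1}^{t}\!\!\int\big[(\n\u\otimes\u)^{\eps}-\n\u\otimes\u\big]:\na\u\to0 .
\end{equation*}
The lemma of \cite{di} gives $\|r^{\eps}\|_{L^2(t_1,t;L^2)}\le C\|\n\|_{L^{\infty}}\|\na\u\|_{L^2(t_1,t;L^2)}$ together with $r^{\eps}\to0$; since $\u\in L^4(t_1,t;L^6)$ forces $\tfrac12|\u|^{2}\in L^2(t_1,t;L^3)\subset L^2(t_1,t;L^2)$, the first pairing vanishes, and the second follows from strong convergence of $(\n\u\otimes\u)^{\eps}$ in $L^2(t_1,t;L^3)$ tested against $\na\u\in L^2(t_1,t;L^{3/2})$. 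Summing the four limits yields $\frac{d}{dt}\int\mathcal{E}=-\int\n|\na\u|^{2}$, which integrates to \eqref{1.12}.

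I expect the main obstacle to be the temporal endpoint at $s=t_1$ generated when the inertial term is integrated in time: one must verify $\int(\n\u)^{\eps}(t_1)\cdot\u(t_1)\to\int\n|\u|^{2}(t_1)$ and that the factor $\tfrac12$ is correctly bookkept. This is exactly where the hypothesis $\u(\cdot,t_1)\in L^{q_0}$ with $q_0>3$ enters: it makes the trace of $\u$ at $t_1$ regular enough for the mollified momentum $(\n\u)^{\eps}(t_1)$ to converge against it and for the endpoint kinetic energy to be finite. Finally, since the construction is valid only for a.e. $t_1$ (Lebesgue points of $s\mapsto\int\mathcal{E}$ and of the dissipation), I would invoke the continuity in time of $t\mapsto\int\n|\u|^2$, inherited from the now non-degenerate parabolic structure, to promote the conclusion to all $t\in[t_1,\infty)$, establishing equality in \eqref{3.37}.
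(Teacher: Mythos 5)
Your proposal is correct and follows essentially the same route as the paper: the paper disposes of Proposition \ref{p} by citing \cite[Theorem 1.1]{yu2}, whose argument is precisely the spatial-mollification and DiPerna--Lions commutator scheme you sketch, with the three hypotheses in \eqref{3.38} entering exactly where you place them (non-degeneracy to upgrade $\sqrt{\n}\na\u$ to $\na\u\in L^{2}$, the $L^{4}L^{6}$ bound to pair $|\u|^{2}$ with the commutator $r^{\eps}$, and the $L^{q_{0}}$ trace at $t_{1}$ for the temporal endpoint). The only caveat is that your treatment of the inertial pair is more compressed than the cited proof, which handles $\p_{t}(\n\u)^{\eps}\cdot\u$ with additional care since $\u$ has no time regularity, but the strategy is the same.
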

\begin{proof} The proof of   Proposition \ref{p} is available   \cite[Theorem 1.1]{yu2}.\end{proof}

\bigskip

In this connection, to  prove \eqref{1.12}  it suffices to  verify   \eqref{3.38}.  In fact, the first two conditions $\eqref{3.38}_{1} $ and $\eqref{3.38}_{2}$ are from  \eqref{1.10} and  \eqref{1.11} respectively.   Using   \eqref{1.11} once more,  we apply  the  Lebesgue Point theorem and obtain that,  for a.e.  $t_{1}\in (T^{*},\infty),$
\bnn \|\u(\cdot,t_{1})\|_{L^{q_{0}}}^{q_{0}}=\lim_{\ep\rightarrow0}\int_{t_{1}}^{t_{1}+\ep}\int |\u(x,s)|^{q_{0}}dxds\le C.\enn
This confirms $\eqref{3.38}_{3}$.

\bigskip

\subsection{Exponential asymptotics} We  shall    prove that the $L^{2}$ norm of the  weak solution $(\n,\u)$  decays exponentially to its equilibrium as time goes to infinity.  The following    operations are assumed to be  carried  out over   $(T^{*},\infty).$

\bigskip

Denote  the material derivative of $f$ by  $\frac{d}{dt}f=\p_{t}+\u\cdot \na f$.
Owing to   \eqref{1.14},  $\eqref{n1}_{1}$, and the transport theorem, we  compute
\be\la{3.40}\ba \int \na \n^{\g} (\u-(\u)_{a})&=-\int (\n^{\g}-\overline{\n}^{\g})\div \u\\
&=\int \n\frac{(\n^{\g}-\overline{\n}^{\g})}{\n^{2}} \frac{d}{dt}\n \\
&=\int \n \frac{d}{dt}\int_{\overline{\n}}^{\n}\frac{s^{\g}-\overline{\n}^{\g}}{s^{2}}ds\\
&=\frac{d}{dt}\int \left(\n\int_{\overline{\n}}^{\n}\frac{s^{\g}-\overline{\n}^{\g}}{s^{2}}\right)ds.\ea\ee
From   $\eqref{n1}_{2}$ one  has
\be\la{3.41} \n (\u-(\u)_{a})_{t}+\n \u\cdot \na (\u-(\u)_{a})+\na \n^{\g}=\div (\n\na \u).\ee
Multiplying  \eqref{3.41}  by $ \u-(\u)_{a}$ and using \eqref{3.40}, we  obtain
\be\ba\la{3.42}\frac{d}{dt}\int \n\left(\frac{1}{2} |\u-(\u)_{a}|^{2}+\int_{\overline{\n}}^{\n}\frac{s^{\g}-\overline{\n}^{\g}}{s^{2}}ds\right)+\int \n |\na \u|^{2}=0.\ea\ee

Next, for  the operator $\mathcal{B}$   defined in Lemma \ref{lem2.4},  we have
 \bnn\ba &\int \n (\u-(\u)_{a})_{t}  \mathcal{B}(\n-\overline{\n})\\
 &=\frac{d}{dt} \int \n (\u-(\u)_{a})  \mathcal{B}(\n-\overline{\n})\\
 &\quad - \int \n_{t} (\u-(\u)_{a})  \mathcal{B}(\n-\overline{\n})+\int \n (\u-(\u)_{a}) \mathcal{B}\div (\n\u) \\
 &=\frac{d}{dt} \int \n (\u-(\u)_{a})\mathcal{B}(\n-\overline{\n})\\
 &\quad - \int \n_{t} (\u-(\u)_{a})  \mathcal{B}(\n-\overline{\n})+\int \n\left(\n |\u-(\u)_{a}|^{2}+ (\u-(\u)_{a})(\n-\overline{\n})(\u)_{a}\right) \ea \enn
 and  \bnn \ba &\int \n\u \na (\u-(\u)_{a})  \mathcal{B}(\n-\overline{\n})\\
 &= - \int \div(\n\u) (\u-(\u)_{a})  \mathcal{B}(\n-\overline{\n})-\int \n\u   (\u-(\u)_{a})  \na  \mathcal{B}(\n-\overline{\n}).\ea \enn
Taking  the last two inequalities and  $\eqref{n1}_{1}$ into account,  we multiply   $\eqref{3.41}$  against  $ \mathcal{B}(\n-\overline{\n})$ and compute
\be\ba\la{3.43}  & \frac{d}{dt} \int \n (\u-(\u)_{a})  \mathcal{B}(\n-\overline{\n})\\
&+\int \n^{2}   |\u-(\u)_{a}|^{2}+\int \n(\n-\overline{\n})  (\u-(\u)_{a})(\u)_{a}\\
& -\int \n\u (\u-(\u)_{a}) \na  \mathcal{B}(\n-\overline{\n})-\int (\n^{\g}-\overline{\n}^{\g})(\n-\overline{\n})\\
&+\int \n\na \u \na  \mathcal{B}(\n-\overline{\n}) =0.\ea\ee
For   small constant $\de\in (0,1)$ to be determined,  we multiply  \eqref{3.43} by $\de,$  add  it up to \eqref{3.42}, to receive
\be\la{3.44} \frac{d}{dt} A +B=0,\ee where
 \bnn\ba A:= \int\n\left(\frac{1}{2} |\u-(\u)_{a}|^{2}+\int_{\overline{\n}}^{\n}\frac{s^{\g}
 -\overline{\n}^{\g}}{s^{2}}ds\right)-\de\int \n(\u-(\u)_{a}) \mathcal{B}(\n-\overline{\n})
 \ea\enn
 and
 \bnn \ba B&=\int \n |\na \u|^{2}+\de\int (\n^{\g}-\overline{\n}^{\g})(\n-\overline{\n})\\
&\quad-\de\int \n^{2}   |\u-(\u)_{a}|^{2}-\de\int \n(\n-\overline{\n})(\u-(\u)_{a})(\u)_{a} \\
 & \quad+\de\int \n\u   (\u-(\u)_{a})\na\mathcal{B}(\n-\overline{\n}) -\de\int \n\na \u \na  \mathcal{B}(\n-\overline{\n}).\ea\enn
We  estimate $A$ and $B$ in below.

First, due to  \eqref{1.10},  Lemma \ref{lem2.4}, the  Poincar\'{e} inequality, it  satisfies   that
 \bnn C^{-1}|\n-\overline{\n}|^{2}\le \n \int_{\overline{\n}}^{\n}\frac{s^{\g}-\overline{\n}^{\g}}{s^{2}}ds \le C|\n-\overline{\n}|^{2}\enn
and
\bnn\ba&  \left|\int \n(\u-(\u)_{a})\cdot \mathcal{B}(\n-\overline{\n})\right|\\
&\le  C\|\u-(\u)_{a}\|_{L^{2}}^{2}+C\left\| \mathcal{B}(\n-\overline{\n})\right\|_{L^{2}}^{2} \\
 &\le   C\|\u-(\u)_{a}\|_{L^{2}}^{2}+C\left\|\na\mathcal{B}(\n-\overline{\n})\right\|_{L^{2}}^{2}\\
 &\le   C\|\u-(\u)_{a}\|_{L^{2}}^{2}+C\left\|\n-\overline{\n}\right\|_{L^{2}}^{2}.\ea\enn
  The last two inequalities together with \eqref{1.10} guarantee,   for a   small $\de,$  there exists some constant $C_{1}=C_{1}(\de)>1$ so that
 \be\la{3.45}C_{1}^{-1}\left(\|\u-(\u)_{a}\|_{L^{2}}^{2}+\|\n-\overline{\n}\|_{L^{2}}^{2}\right)\le  A\le C_{1}\left(\|\u-(\u)_{a}\|_{L^{2}}^{2}+\|\n-\overline{\n}\|_{L^{2}}^{2}\right).\ee

\bigskip

Next to estimate $B$.  Using
  \eqref{1.10}, Lemma \ref{lem2.4}, and  the Poincar\'{e} inequality, one deduces
\be\la{3.46} \ba &\qquad \int (\n^{\g}-\overline{\n}^{\g})(\n-\overline{\n})\ge C^{-1}\| \n-\overline{\n}\|_{L^{2}}^{2},\\
 &\|\mathcal{B}(\n-\overline{\n})\|_{L^{2}}\le C\|\na\mathcal{B}(\n-\overline{\n})\|_{L^{2}}\le C \|\n-\overline{\n}\|_{L^{2}}  \le C\|\na \n\|_{L^{2}},\\
 & \qquad \qquad  \|\u-(\u)_{a}\|_{L^{2}}\le C\|\na \u\|_{L^{2}}.\ea \ee
 \begin{remark} The validity of the  last inequality  $\eqref{3.46}_{3}$ can be understand as follows:
It gives from  \eqref{3.1} and \eqref{3.2} that  $(\u)_{a}=\frac{\int \m_{0}}{\int \n_{0}}=\frac{\int \n\u}{\int \n}.$
Let us mollify $\u$ by $\u^{\ep}=\u*\zeta(x)$ with $\zeta$ the  Friedrichs mollifier, then, by mean value theorem,  we have  $\u^{\ep}(\xi)=(\u^{\ep})_{a}=\frac{\int \n \u^{\ep}}{\int \n}$ for  some $\xi\in \o$.  Hence
 \bnn\ba &\|\u-(\u)_{a}\|_{L^{2}}\\&\le \|\u-\u^{\ep}\|_{L^{2}}+\|\u^{\ep}-(\u^{\ep})_{a}\|_{L^{2}}+\|(\u^{\ep})_{a}-(\u)_{a}\|_{L^{2}}\\
 &\le C\|\u-\u^{\ep}\|_{L^{2}}+C\|\na\u^{\ep}\|_{L^{2}}.\ea\enn
By  \eqref{1.10}, sending $\ep\rightarrow0$ in above inequality gives $\eqref{3.46}_{3}.$
 \end{remark}
In accordance with   \eqref{3.46},   \eqref{1.10},  \eqref{1.11},    we deduce
 \be\la{3.47} \ba B& \ge C\|\na \u\|_{L^{2}}^{2}+C\de  \| \n-\overline{\n} \|_{L^{2}}^{2}\\
 &\quad - C\de \|\na \u\|_{L^{2}}^{2}-C\de  \| \na\u  \|_{L^{2}} \|\n-\overline{\n}\|_{L^{2}}\\
 & \ge C\|\na \u\|_{L^{2}}^{2}+C\de  \| \n-\overline{\n} \|_{L^{2}}^{2}\\
 &\quad -  C\de \|\na\u   \|_{L^{2}}^{2}-C\de\left(\de^{-\frac{1}{2}}\|\na \u\|_{L^{2}}^{2}+ \de^{\frac{1}{2}}\| \n-\overline{\n}\|_{L^{2}}^{2}\right)\\
 &\ge C(1-\de^{\frac{1}{2}})\|\na \u\|_{L^{2}}^{2}+C\de (1-\de^{\frac{1}{2}}) \| \n-\overline{\n} \|_{L^{2}}^{2}\\
 &\ge C_{2} \left(\|\na \u\|_{L^{2}}^{2}+  \| \n-\overline{\n} \|_{L^{2}}^{2}\right),\ea\ee where the constant $C_{2}=C_{2}(\de)>0$, as long as $\de\in (0,1)$ is taken small enough.

 Therefore, with  \eqref{3.45} and \eqref{3.47} in hand, we may choose  $\sigma C_{1}\le \frac{1}{2}C_{2}$ and multiply \eqref{3.44} by $e^{\sigma t}$, to deduce
 \bnn \ba & \frac{d}{dt} \left(e^{\sigma t} \left(\|\u-(\u)_{a}\|_{L^{2}}^{2}+  \| \n-\overline{\n}\|_{L^{2}}^{2}\right)\right)\\
 &\quad +  C_{3} e^{\sigma t} \left(\|\na\u\|_{L^{2}}^{2}+  \| \n-\overline{\n}\|_{L^{2}}^{2}\right) \le 0  \ea\enn for some $C_{3}=\frac{1}{2}C_{1}^{-1}C_{2}.$ \quad
Integration of  the above inequality  yields
  \be\la{3.48}\ba &  e^{\sigma t} \left(\|\u-(\u)_{a}\|_{L^{2}}^{2}+  \| \n-\overline{\n}\|_{L^{2}}^{2}\right)+\int_{t_{1}}^{\infty}e^{\sigma t} \|\na\u\|_{L^{2}}^{2}dt \\
 &\le C e^{\sigma t_{1}} \left(\|\na\u\|_{L^{2}}^{2}+  \| \n-\overline{\n}\|_{L^{2}}^{2}\right)(t_{1})\\
 &\le C,   \ea\ee for some   $t_{1}>T^{*}$ and for any $t\ge t_{1}.$
 This yields  the   desired \eqref{1.13}.
Thus, the   proof of Theorem \ref{t1} is completed.

\section{Proof of Theorem \ref{t2}}

 \subsection{Exponential decay of $\|\na\n\|_{L^{2}}$} In case of zero initial momentum assumption \eqref{1.15}, we show that the $L^{2}$-norm of the derivative of density decays exponentially  to zero.

 \bigskip

Thanks to   \eqref{1.15} and  \eqref{3.48},
\be\la{4.1}\ba &  e^{\sigma t}  \|\u\|_{L^{2}}^{2}+\int_{t_{1}}^{\infty}e^{\sigma t} \|\na\u\|_{L^{2}}^{2}dt \le C,   \quad  t> t_{1}.\ea\ee
Recalling   \eqref{1.5} and   \eqref{1.10}, for some small $\de\in (0,1)$ one  has
\bnn\ba   \de \|\na\n\|_{L^{2}}^{2}-C(\de)\|\u\|_{L^{2}}^{2}\le \int \n|\u+\na\ln\n|^{2}& \le  \|\na\n\|_{L^{2}}^{2}+C \|\u\|_{L^{2}}^{2}\ea\enn
   and  \bnn\ba \int_{\o}\left(|\na \n^{\frac{\g}{2}}|^{2}+\Lambda^{2}\right)\ge \int_{\o} |\na \n^{\frac{\g}{2}}|^{2}\ge  C_{4}\|\na\n\|_{L^{2}}^{2}.\ea\enn
 With   the last two inequalities, we multiply \eqref{1.7} by $e^{\sigma_{1}t}$ and deduce
 \be\la{4.2}\ba & \de e^{\sigma_{1}t} \|\na\n\|_{L^{2}}^{2} +C_{4}\int_{t_{1}}^{\infty}e^{\sigma_{1} t} \|\na\n\|_{L^{2}}^{2}dt\\
 &\le C(\de) e^{\sigma_{1}t}\|\u\|_{L^{2}}^{2}+\sigma_{1}\int_{t_{1}}^{\infty} e^{\sigma_{1}t}\left( \|\na\n\|_{L^{2}}^{2}+C \|\u\|_{L^{2}}^{2}\right).\ea\ee
Select  $0<\sigma_{1}< \min\{\sigma,\,C_{4}\}$ in \eqref{4.2},  and utilize  \eqref{4.1}, to discover
 \bnn\ba   e^{\sigma_{1}t} \|\na\n\|_{L^{2}}^{2} + \int_{t_{1}}^{\infty}e^{\sigma_{1} t} \|\na\n\|_{L^{2}}^{2}dt\le C(\de),\ea\enn
 which yields the  required \eqref{1.16}.

 \bigskip

 \subsection{Higher  regularity    in dimension two}
   In the rest of this paper, we show that $(\n,\u)$ becomes a   strong solution to \eqref{n1}  in two  dimensional case when time is large.

   \bigskip

 In terms of  \eqref{1.5}, \eqref{1.10}, \eqref{1.11}, and the mass equation $\eqref{n1}_{1}$, it satisfies that   for all $t>T^{*}$
\be\la{4.3}\ba &\n \in L^{\infty}(t,\infty; L^{\infty})\cap L^{2}(t,\infty; H^{1}) \cap H^{1}(t,\infty; L^{2}),\\
&\qquad\u\in L^{\infty}(t,\infty;   L^{\infty})\cap L^{2}(t,\infty; H^{1}).\ea\ee
Using \eqref{4.3}, we   multiply   $\eqref{n1}_{2}$ by $\u_{t}$ and integrate  it over $(t,\infty)$, to find
\be\la{4.4}\ba &\frac{1}{2}\frac{d}{dt}\int \n |\na\u|^{2}+ \int \n |\u_{t}|^{2}+\\
&= - \int \n\u\cdot \na \u\cdot \u_{t}- \int \u_{t}\cdot \na\n^{\g}+\frac{1}{2} \int \n_{t} |\na\u|^{2}\\
&\le \frac{1}{4} \int \n |\u_{t}|^{2} + C\int\left(|\na\n|^{2}+|\na\u|^{2}\right)+\frac{1}{2} \int \n_{t} |\na\u|^{2}. \ea\ee
By   Lemma \ref{lem2.3} and the momentum equations, it satisfies
\be\la{4.5}\ba &\left\|\na^{2}\u\right\|_{L^{\frac{3}{2}}}\\
&\le C\left\|\lap\u\right\|_{L^{\frac{3}{2}}}\\
&\le C\left(\|\div(\n\na\u)\|_{L^{\frac{3}{2}}}+\|\na\n\na\u\|_{L^{\frac{3}{2}}}\right)\\
&\le C\left(\|\sqrt{\n}\u_{t}\|_{L^{2}}+\|\na\u \|_{L^{2}}+\|\na\n \|_{L^{2}}\right)+C_{4}\|\na\n\|_{L^{2}}\|\na\u\|_{L^{6}}.\ea\ee
Remember that the   Sobolev inequality in dimension two, it has
 \bnn \|\na\u\|_{L^{6}} \le   C  \left(\|\na^{2}\u\|_{L^{\frac{3}{2}}}+\|\na\u\|_{L^{2}}\right),\enn
which combining with  \eqref{1.16} shows, for some large  $\widetilde{t}_{1}\ge t_{1}$, \bnn \ba& C_{4}\|\na\n\|_{L^{2}}\|\na\u\|_{L^{6}}\\
&\le C_{4}e^{-\sigma_{1} t}\left(\|\na^{2}\u\|_{L^{\frac{3}{2}}}+\|\na\u\|_{L^{2}}\right)\\
&\le \frac{1}{2}\left(\|\na^{2}\u\|_{L^{\frac{3}{2}}}+\|\na\u\|_{L^{2}}\right),\quad \forall\,\,\,\,t>\widetilde{t}_{1}.\ea\enn
Insert the last  inequality  into \eqref{4.5} brings us to
\be\la{4.6}\ba \left\|\na^{2}\u\right\|_{L^{\frac{3}{2}}}
 \le C\left(\|\sqrt{\n}\u_{t}\|_{L^{2}}+\|\na\u \|_{L^{2}}+\|\na\n \|_{L^{2}} \right).\ea\ee
With the help of  \eqref{4.3}  and \eqref{4.6}, we estimate  the last integral in \eqref{4.4} as
\be\la{4.7}\ba  \int \n_{t} |\na\u|^{2}&= \int \n\u \cdot \na |\na\u|^{2}\\
&\le C  \left\|\n\u\right\|_{L^{\infty}}\left\|\na^{2}\u\right\|_{L^{\frac{3}{2}}}\|\na\u\|_{L^{3}} \\
&\le  \de \left\|\na^{2}\u\right\|_{L^{\frac{3}{2}}}^{2}+ C(\de)   \|\na\u\|_{L^{2}}^{2}\\
&\le C\de \left(\|\sqrt{\n}\u_{t}\|_{L^{2}}+\|\na\n \|_{L^{2}} \right)^{2} +C(\de) \|\na\u \|_{L^{2}}^{2}.\ea\ee
If we  substitute   \eqref{4.7} into \eqref{4.4}, and choose $\de$ suitably small,   we get
\be\la{4.8}\ba \frac{d}{dt}\int \n |\na\u|^{2}+ \int \n |\u_{t}|^{2}\le C\int\left(|\na\n|^{2}+|\na\u|^{2}\right). \ea\ee
Hence,   integrating    \eqref{4.8}  after multiplied by $e^{\sigma_{1}t}$, utilizing  \eqref{4.1}, \eqref{1.10} \eqref{1.16},  we obtain
\be\la{4.9}\ba  e^{\sigma_{1}t}\|\na\u(\cdot,t)\|_{L^{2}}+ \int_{\widetilde{t}_{1}}^{t}e^{\sigma_{1}s}\|\p_{s}\u\|_{L^{2}}^{2}ds\le C,\quad t>\widetilde{t}_{1}. \ea\ee
The combination of \eqref{4.9} with  \eqref{1.16} generates \eqref{1.18}.

As a  result  of   \eqref{4.6} and \eqref{4.9},   we have
\bnn\la{j12}\u\in L^{\infty}(t,\infty; H^{1})\cap L^{2}(t,\infty; W^{2,\frac{3}{2}}) \cap H^{1}(t,\infty; L^{2}),\quad t>\widetilde{t}_{1}.\enn
This proves  the desired \eqref{1.17}.

The proof of Theorem \ref{t2} is thus completed.


\bigskip

\begin {thebibliography} {99}

\bibitem{adams} R. Adams,    Sobolev spaces, New York: Academic Press  (1975).

\bibitem{ag} S. Agmon, A.  Douglis, L. Nirenberg, {Estimates near the boundary for solutions
of elliptic partial differential equations satisfying general boundary conditions,} I,
Comm. Pure Appl. Math., {\bf12} (1959),623-727; II, Comm. Pure Appl. Math. {\bf17}
(1964), 35-92.

\bibitem{anton} S. N. Antontsev; A. V.  Kazhikhov;  V. N.  Monakhov, Boundary Value Problems
in Mechanics of Nonhomogeneous Fluids. Amsterdam, New York: North-Holland, 1990.

\bibitem{kim1} Y. Cho,  H. Choe, H.  Kim, {\em Unique solvability of the initial boundary value problems for compressible viscous fluids,} J. Math. Pures Appl., {\bf 83} (9)  (2004),  243-275.

\bibitem{des} B. Desjardins, Regularity of weak solutions of the compressible isentropic Navier Stokes equations,
 Commun. Partial  Differ. Equ.,  \textbf{22} (1997), 977-1008.

\bibitem{bres}D. Bresch;  B. Desjardins, { Some diffusive capillary models of Korteweg type,}  C. R.
Math. Acad. Sci. Paris, Section Mecanique, \textbf{332}(11), (2004), 881-886.
\bibitem{bres1}  D. Bresch;  B. Desjardins, {Existence of global weak solution for 2D viscous shallow
water equations and convergence to the quasi-geostrophic model,} Comm. Math. Phys.,
\textbf{238}(1-2)  (2003),  211-223.

\bibitem{liang} R. Chen; Z. Liang; D. Wang and  R. Xu, Energy equality in compressible fluids with physical boundaries, SIAM J. Math. Anal., \textbf{52} (2020),  1363-1385.

 \bibitem{di}    R.  DiPerna; P.  Lions, Ordinary differential equations, transport theory and
 Sobolev spaces, Invent. Math. \textbf{98} (1989), 511-547.

\bibitem{fei} E. Feireisl; A. Novotn\'{y}; H. Petzeltov\'{a}, On the existence of globally defined weak solutions to the Navier-Stokes equations, J. Math. Fluid Mech., \textbf{3}  (2001),  358-392.

\bibitem{kim} H. Choe; B. Jin, { Regularity of weak solutions of the compressible Navier-Stokes equations} J. Korean  Math. Soc.,  \textbf{40} (2003), 1031-1050.

\bibitem{g}   P. Gent, The energetically consistent shallow water equations, J. Atmos. Sci., \textbf{50} (1993),  1323-1325.

    \bibitem{guo}  Z. Guo;  Q. Jiu;  Z.  Xin, { Spherically symmetric isentropic compressible flows with density-dependent viscosity coefficients,}  SIAM J. Math. Anal., \textbf{39}  (2008), 1402-1427.

\bibitem{has} B. Haspot, Existence of global strong solution for the compressible Navier-Stokes equations with degenerate viscosity coefficients in 1D, Mathematische Nachrichten,  \textbf{291}(14-15)  (2018),  2188-2203.

\bibitem{hoff} D. Hoff,   Global existence for 1D, compressible, isentropic Navier-Stokes equations
with large initial data, Trans. Amer. Math. Soc., \textbf{303} (1987),  169-181.

\bibitem{hoff1} D. Hoff, Global solutions of the Navier-Stokes equations for multidimensional, compressible flow with
discontinuous initial data. J. Differ. Equ. \textbf{120} (1995), 215-254

 \bibitem{gp} J.  Gerbeau; B.  Perthame,  Derivation of viscous Saint-Venant system for laminar shallow water, numerical validation, Discrete Contin. Dyn. Syst. Ser B, \textbf{1}
(2001),  89-102.

\bibitem{jiang1} S. Jiang, Large-time behavior of solutions to the equations of a one-dimensional
viscous polytropic ideal gas in unbounded domains, Comm. Math. Phys., \textbf{200}
(1999), 181-193.

\bibitem{jiang2} S. Jiang,  Remarks on the asymptotic behaviour of solutions to the compressible
Navier-Stokes equations in the half-line, P. Roy. Soc. Edinb. A,  \textbf{132}
(2002), 627-638.

\bibitem{jiang3} S. Jiang; Z. Xin; P. Zhang,   Golobal weak solutions to 1D compressible isentropic
Navier-Stokes equations with density-dependent viscosity, Methods Appl. Anal.,
\textbf{12}  (2005),  239-252.

\bibitem{jiangzhang}S. Jiang; P. Zhang, { On spherically symmetric solutions of the compressible isentropic Navier-Stokes
equations,}  Comm. Math. Phys., \textbf{215}  (2001), 559-581.

\bibitem{jiu} Q. Jiu;  Z. Xin,  { The Cauchy problem for 1D compressible flows with density-dependent viscosity coefficients,} Kinet. Relat. Mod., \textbf{1}(2)  (2008), 313-330.

\bibitem{kv}  M.  Kang;  A. Vasseur, Global smooth solutions for 1D barotropic Navier-Stokes equations with a large class of degenerate viscosities,
J. Nonlinear Sci., \textbf{30}(4)  (2020),  1703-1721.

\bibitem{lad} O. Ladyzenskaja; V. Solonnikov; N. Uraltseva,   Linear and quasi-linear
equations of parabolic type. Translated from the Russian by S. Smith. Translations of Mathematical Monographs, Vol. 23. American Mathematical Society,
Providence, R.I., 1968.

\bibitem{lau} L. Laudau; E. Lifshitz, { Electrodynamics of Continuous Media}, 2nd edn. Pergamon, New York (1984)

\bibitem{llx} H. Li;  J. Li;  Z. Xin, { Vanishing of vacuum states and blow-up phenomena of the compressible Navier-Stokes equations,}
 Comm. Math. Phys., \textbf{281}(2)  (2008), 401-444.


 \bibitem{liangli} J. Li; Z. Liang, Some uniform estimates and large-time
behavior of solutions to one-dimensional
compressible Navier-Stokes system in unbounded domains with large data,  Arch. Rational Mech. Anal., \textbf{220} (2016), 1195-1208.

 \bibitem{lx} J.  Li;  Z. Xin, Global existence of weak solutions to the barotropic compressible Navier-Stokes flows with degenerate viscosities,   http://arxiv.org/abs/1504.06826v2.

     \bibitem{liang1}  Z. Liang, Regularity criterion on the energy conservation for the
compressible Navier-Stokes equations,  P. Roy. Soc. Edinb. A, (2020), 1-18.

\bibitem{p2}  P. Lions,  {Mathematical Topics in Fluid Mechanics,}  Volume 1-2, Oxford
Science Publication, Oxford, (1996,1998).

\bibitem{mv} A. Mellet; A.  Vasseur,   On the barotropic compressible Navier-Stokes equations, Commun. Partial  Differ. Equ., \textbf{32}  (2007),  431-452.

\bibitem{nov} A. Novotn\'{y}; I. Stra\u{s}kraba, Introduction to the Mathematical Theory of Compressible Flow. Oxford Lecture Series in Mathematics and its Applications,  Oxford University Press, Oxford, \textbf{27}, 2004.

    \bibitem{ngu} Q.  Nguyen; P.  Nguyen; B Tang, Energy equalities for compressible Navier-Stokes equations, Nonlinearity, \textbf{32}(11) (2019), 4206-4231.

\bibitem{pw} P. Plotnikov;  W. Weigant, Isothermal Navier--Stokes Equations and Radon Transform, SIAM J. Math. Anal., \textbf{47}(1)  (2015), 626-653.

\bibitem{serr} D. Serre, Solutions faibles globales des quations de Navier-Stokes pour un fluide compressible, C. R.
Acad. Sci. Paris. I Math. \textbf{303}(13) (1986),  639-642.

\bibitem{sz} I.  Stra\u{s}kraba; A. Zlotnik,  Global properties of solutions to 1D-viscous compressible
barotropic fluid equations with density dependent viscosity, Z. angew. Math. Phys., \textbf{54} (2003), 593-607.

\bibitem{yu} A. Vasseur; C. Yu, {Existence of global weak solutions for 3D degenerate compressible Navier-Stokes equations,} Invent. Math., \textbf{206}(3)  (2016), 935-974.


    \bibitem{wang} Y. Ye; Y. Wang; W. Wei, Energy equality in the isentropic compressible Navier-Stokes equations allowing vacuum, arXiv: 2108.09425

\bibitem{yu2}   C. Yu, Energy conservation for the weak solutions of the compressible Navier-Stokes equations,  Arch. Ration.
Mech. Anal., \textbf{225}(3) (2017), 1073-1087.

\bibitem{yang1} T. Yang; Z. Yao; C. Zhu,   Compressible Navier-Stokes equations with density dependent viscosity and vacuum, Commun. Partial Differ. Equ., \textbf{26} (2001),
965-981.
\bibitem{yang2} T. Yang; H. Zhao,  A vacuum problem for the one-dimensional compressible
Navier-Stokes equations with density-dependent viscosity, J. Differ. Equ., \textbf{184} (2002), 163-184.
\bibitem{yang3} T. Yang;   C. Zhu, Compressible Navier-Stokes equations with degenerate viscosity
coefficient and vacuum, Comm. Math. Phys., \textbf{230}  (2002),  329-363.

 \end {thebibliography}
\end{document}